\newcommand{\rright}{\right}
\newcommand{\lleft}{\left}
\newtheorem{lemma}{Lemma}[section]
\newtheorem{teo}[lemma]{Theorem}
\newtheorem{prop}[lemma]{Proposition}
\newtheorem{cor}[lemma]{Corollary}
\newtheorem{lem}[lemma]{Lemma}
\newtheorem{hyp}[lemma]{Hypothesis}
\newcommand{\dd}{\mathrm{d}}
\begin{document}
\begin{frontmatter}

\title{Stability of solitons under rapidly oscillating random
perturbations of the initial conditions}
\runtitle{Random perturbations of solitons}

\begin{aug}
\author[A]{\fnms{Ennio} \snm{Fedrizzi}\corref{}\ead[label=e1]{fedrizzi@math.univ-lyon1.fr}\thanksref{t1}}
\runauthor{E. Fedrizzi}
\affiliation{Universit\'e Paris Diderot}
\address[A]{LPMA, UMR 7599 CNRS\\
Universit\'e Paris Diderot\\
Sorbonne Paris Cit\'e\\
75205 Paris\\
France\\
\printead{e1}} 
\end{aug}
\thankstext{t1}{Supported by French Agence Nationale de la Recherche
(project MANUREVA ANR-08-SYSC-019).}

\received{\smonth{1} \syear{2012}}
\revised{\smonth{3} \syear{2013}}

%
\begin{abstract}
We use the inverse scattering transform and a diffusion approxi\-mation
limit theorem to study the stability of soliton components of the
solution of the nonlinear Schr\"odinger and Korteweg--de Vries
equations under random perturbations of the initial conditions: for a
wide class of rapidly oscillating random perturbations this problem
reduces to the study of a canonical system of stochastic differential
equations which depends only on the integrated covariance of the
perturbation. We finally study the problem when the perturbation is
weak, which allows us to analyze the stability of solitons
quantitatively.
\end{abstract}

%
\begin{keyword}[class=AMS]
\kwd[Primary ]{60B12}
\kwd{35C08}
\kwd[; secondary ]{35Q53}
\kwd{35Q55}
\end{keyword}
\begin{keyword}
\kwd{Diffusion approximation limit theorem}
\kwd{random perturbation of initial conditions}
\kwd{solitons}
\kwd{NLS equation}
\kwd{KdV equation}
\end{keyword}

\end{frontmatter}

\section{Introduction}\label{sec1}

The aim of the present work is to study the stabi\-lity of the soliton
components of solutions of completely integrable systems under rapidly
oscillating random perturbations of the initial condition. We will
consider and compare two important examples of equations widely
employed to model nonlinear and dispersive effects in wave propagation:
the (1-dimensional) nonlinear Schr\"odinger (NLS) equation,
%
%
\begin{equation}
\label{NLS intro} \frac{\partial U}{\partial t} + \frac{i}{2}\frac
{\partial^2 U}{\partial x^2}
+i|U|^2 U =0
\end{equation}
and the Korteweg--de Vries (KdV) equation,
%
%
\begin{equation}
\label{KdV intro} \frac{\partial U}{\partial t} + 6 U \frac{\partial
U}{\partial x} +
\frac{\partial^3 U}{\partial x^3}=0.
\end{equation}
The NLS equation models in particular short pulse propagation in
single-mode optical fibers (then $t$ is a propagation distance and $x$
is a time) \cite{MN}. The KdV equation models shallow water wave
propagation \cite{Wh}.

Explicit results are derived for the case of a square (box-like)
initial condition perturbed with a zero mean, stationary, rapidly
oscillating process $\nu(x/\varepsilon^2)$,
%
%
\begin{equation}
\label{IC intro} U_0 (x) = \biggl(q + \frac{\sigma}{\varepsilon} \nu \bigl(x/
\varepsilon^2 \bigr) \biggr) \mathbf{1}_{[0,R]}(x),
\end{equation}
but the results on the fast oscillating regime of Section~\ref{section
W} can be extended to the case of perturbation of a more general
initial condition defined by a bounded, compactly supported function
$q(x)$. The function $q$ must be real for KdV, but is allowed to take
complex values for NLS: the computations presented in Sections~\ref{section W} and \ref{sec NLS} are relative to the case of a real
$q$, but can be easily extended to the case of a complex (but with
constant phase) function $q$. The rapidly oscillating fluctuations of
the initial condition can model the high frequency additive noise of
the light source generating the pulse in nonlinear fiber optics, for
instance.

Our approach to both examples relies on the inverse scattering
transform (IST), a powerful tool used to study solutions of completely
integrable nonlinear equations; see \cite{APT}. In this framework, the
problem is transformed into a linear system of differential equations
where the initial condition enters as a
potential, and soliton components correspond to eigenvalues. %
Indeed, the solution of a nonlinear dispersive equation modeling the
propagation of waves may show two components with a very distinct
beha\-vior: the soliton components, composed of solitary waves that
propagate over arbitrarily large distances with constant velocity and
constant profile, and in addition, the radiation component, whose
amplitude decays in time as a~power law. The identification of the
soliton components therefore characterizes the long-time behavior of
the solution of the PDE. A short introduction to the IST is presented
in Section~\ref{sec:IST-det}, together with a discussion of the
deterministic case [$\sigma=0$ in equation (\ref{IC intro})].

We will show in Section~\ref{section W} that for rapidly oscillating
processes (small values of $\varepsilon$) the limit system governing
the stability of the soliton components reads as a set of stochastic
differential equations (SDEs), and it is formally equivalent to the
system where the initial condition contains a white-noise perturbation,
%
%
\begin{equation}
\label{IC intro W} U_0 (x) = (q + \sqrt{2\alpha} \sigma\dot
W_x ) \mathbf{1}_{[0,R]}(x),
\end{equation}
where $\alpha$ is the integrated covariance of the process $\nu$. This
shows that to study the soliton components in the limit of rapid
oscillations the only required parameter of the statistics of $\nu$ is
its integrated covariance. Notice that we cannot directly use a~white
noise to perturb the initial condition, as the IST requires some
integrability conditions on the initial condition (e.g., $U_0\in L^1$),
which are not satisfied by a white noise. The main result is presented
in Theorem \ref{teo convergenza Phi e Psi}.

We also obtain that solitons are stable under perturbations of the
initial condition for both examples studied; this is shown in Sections~\ref{sec NLS} and \ref{sec KdV}. However, a few interesting differences
will be pointed out; in particular, thresholding effects for the
creation of solitons are present in the NLS case and absent for KdV.

We also provide an easy way to compute the first order corrections to
the parameters characterizing the soliton components of the solution.

Results and some future directions of research are discussed in the
last section.

\section{The inverse scattering transform and the deterministic
problem}\label{sec:IST-det}

In the IST framework, a direct scattering problem (known as the
Zakharov--Shabat spectral problem, ZSSP) associated to the NLS equation
is introduced
%
%
\begin{equation}
\label{ZSSP} \cases{ \dfrac{\partial\psi_1}{\partial x} = i U_0(x) \psi_2
- i\zeta\psi_1,
\vspace*{6pt}\cr
\dfrac{\partial\psi_2}{\partial x} = i
U_0^*(x) \psi_1 + i\zeta\psi_2,}
\end{equation}
where $x\in\mathbb{R}$, $\psi_{i}(x)$, $i=1,2$, are the components of a
complex vector eigenfunction $\Psi(x)\in\mathbb{H}^1(\mathbb{R})$ and
$\zeta\in\mathbb{C}$ is the spectral parameter. The space
$\mathbb{H}^1(\mathbb{R})$ is defined as $\mathbb{H}^1(\mathbb{R})= \{
\Psi| \psi_i \in L^2(\mathbb{R}), \partial_x \psi_i \in
L^2(\mathbb{R}), i=1,2 \}$. When $U_0=0$, it is easy to see that the
continuous part of the spectrum is composed by the whole real line. The
eigenspace associated to the eigenvalue $\zeta\in\mathbb{R}$ has
dimension 2, and 
the functions
\[
\Psi\sim\pmatrix{ 1
\cr
0 } e^{-i\zeta x}, \qquad\Phi\sim\pmatrix{ 0
\cr
1 }
e^{i\zeta x}
\]
define a basis of this space. In this case, the discrete spectrum is
empty because the nontrivial solutions of $\partial_x f = i \zeta f$
are not in $L^2(\mathbb{R})$.


When introducing any localized initial condition $U_0$, by Weyl's
theorem the continuous spectrum (unlike the discrete spectrum) remains
unchanged, and for $\zeta\in\mathbb{R}$ the solutions $\Psi$,
$\widetilde\Psi$, $\Phi$, $\widetilde\Phi$ defined by the boundary
conditions
%
%
\begin{eqnarray}
\label{IC IST} \Psi &\sim&\pmatrix{ 1
\cr
0 } e^{-i\zeta x},\qquad\widetilde
\Psi\sim\pmatrix{0
\cr
1 } e^{i\zeta x},\qquad x\to-\infty,
\nonumber
\\[-8pt]
\\[-8pt]
\Phi&\sim&\pmatrix{ 0
\cr
1 } e^{i\zeta x},\qquad\widetilde\Phi\sim\pmatrix{1
\cr
0 } e^{-i\zeta x},\qquad x\to+\infty
\nonumber
\end{eqnarray}
produce two sets $\{\Psi, \widetilde\Psi\}$ and $\{\Phi, \widetilde
\Phi\}$ of linearly independent solutions. These functions are related
through the system
%
%
\begin{equation}
\label{eq:Psi-Phi matrix} \pmatrix{ \Psi
\vspace*{2pt}\cr
\widetilde\Psi} = \pmatrix{ b(
\zeta) & a(\zeta)
\vspace*{2pt}\cr
\widetilde a (\zeta) & \widetilde b (\zeta) } \pmatrix{ \Phi
\vspace*{2pt}\cr
\widetilde\Phi},
\end{equation}
where $a$, $b$ are called Jost coefficients and $\Psi$, $\Phi$ are
called Jost functions. The Jost coefficients are complex-valued
functions, while the Jost functions take values in~$\mathbb{C}^2$.
Therefore, products in the above equation have to interpreted as
``scalar times vector'' products, so that one has
\[
\pmatrix{ \psi_1
\cr
\psi_2 } = b(\zeta) \pmatrix{
\phi_1
\cr
\phi_2 } + a(\zeta) \pmatrix{ \widetilde
\phi_1
\vspace*{2pt}\cr
\widetilde\phi_2 }
\]
and similarly for $\widetilde\Psi$.

If $U_0\in L^1(\mathbb{R})$, the function $a(\zeta)$ can be
continuously extended to the upper half of the complex plane
$\mathbb{C}^+=\{\zeta\in\mathbb{C} | \Im[\zeta]>0\}$, where it is
analytic and can only have a countable number of simple zeros; see
\cite{APT}, Lemma~2.1. These zeros turn out to be the eigenvalues of
the discrete spectrum of the ZSSP (\ref{ZSSP}). If $\zeta_n$ is a zero
of $a$, then from (\ref{eq:Psi-Phi matrix}) we obtain that $\Psi$ and
$\Phi$ are linearly dependent. Due to (\ref{IC IST}) this implies that
the eigenfunction $\Psi_n$ relative to the eigenvalue $\zeta_n$ has an
exponential decay both at $-\infty$ and $+\infty$.

The IST is a powerful tool which allows us to solve many nonlinear
completely integrable systems, and the introduction of the IST
formalism is particularly convenient when dealing with soliton
components of the solution, as solitons have a very easy representation
in terms of the scattering variables: each zero of the Jost coefficient
$a$ in the upper complex half-plane ($\zeta=\xi+i\eta$, $\eta>0$)
corresponds to a soliton component of the solution. As we have just
remarked, these zeros of the Jost coefficient $a$ correspond to the
discrete spectrum of (\ref{ZSSP}).

When we study eigenfunctions of the spectral problem (\ref{ZSSP}) with
a potential $U_{0}$ of compact support in $[0,R]$ we can rewrite the
system (\ref{ZSSP}), which is defined for \mbox{$x\in\mathbb{R}$,} as a
system defined for $x\in[0,R]$, with some boundary conditions in $x=0$
and $x=R$ obtained from (\ref{IC IST}). This can be done as follows. By
inspecting the spectral problem (\ref{ZSSP}) we see that if
$\zeta\in\mathbb{C}^+$ is a discrete eigenvalue, then the corresponding
eigenfunction $\Psi\in\mathbb{H}^1(\mathbb{R})$ for $x\le
\nolinebreak0$ is given by $\psi_1(x) = e^{-i\zeta x}$ and $\psi_2(x)
=0$. For $x\ge R$ it must satisfy $\partial_x \psi_1 = -i \zeta\psi_1$
and $\partial_x \psi_2 = i \zeta\psi_2$, that is, to say $\psi_1(x) =
\psi_1(R) e^{-i \zeta(x-R)}$ and $\psi_2(x) = \psi_2(R) e^{i\zeta
(x-R)}$. Since the eigenfunction $\Psi$ must be integrable and
$\Im[\zeta]=\eta>0$, this implies that
\[
\psi_1(R)=0.
\]

A pure soliton solution of the NLS equation has the form
\[
\label{eq soliton NLS} U (t,x) = 2 i \eta\frac{\exp( -2i\xi x
-4i(\xi^2-\eta^2)t )}{\cosh( 2\eta(x+4\xi t) )},
\]
up to a shift and a phase.

Similarly, one can link the existence of soliton components of
solutions of the KdV equation to the spectral properties of an
associated equation, the first equation of the Lax pair,
%
%
\begin{equation}
\label{KdV lax 1} \frac{\partial^2 \varphi}{\partial x^2} + \bigl(U_0 +
\zeta^2 \bigr) \varphi=0,
\end{equation}
where the real function $\varphi(x)$ belongs to the Sobolev space
$W^{2,2}(\mathbb{R})$. We need to assume that
%
%
\begin{equation}
\label{integrability cond KdV} U_0 \in P_1:= \biggl\{ f
\dvtx \mathbb{R}\to\mathbb{R} \bigg| \int_{-\infty}^{\infty}
\bigl(1+|x| \bigr) \bigl| U(x) \bigr|\, \dd x <\infty \biggr\};
\end{equation}
see \cite{AC}, Chapter~2. 
Consider the continuous part of the spectrum of equation (\ref{KdV lax
1}), which is again the real axis. For $\zeta\in\mathbb{R}$, there are
two convenient complete sets of bounded functions solutions of
(\ref{KdV lax 1}), defined by their asymptotic behavior:
\begin{eqnarray*}
\phi(x,\zeta) &\sim& e^{-i\zeta x}, \qquad\widetilde\phi(x,\zeta) \sim
e^{i\zeta x}\qquad\mbox{for } x\to-\infty;
\\
\psi(x,\zeta) &\sim& e^{i\zeta x}, \qquad\widetilde\psi(x,\zeta) \sim
e^{-i\zeta x}\qquad\mbox{for } x\to+\infty.
\end{eqnarray*}
It follows from the above definitions that
\[
\phi(x,\zeta) = \widetilde\phi(x, -\zeta), \qquad\psi(x,\zeta) =\widetilde
\psi(x, -\zeta)
\]
and
\begin{eqnarray*}
\phi(x,\zeta) &=& a(\zeta) \widetilde\psi(x,\zeta) + b(\zeta) \psi (x,\zeta),
\\
\widetilde\phi(x,\zeta) &=& -\widetilde a (\zeta) \psi(x,\zeta) + \widetilde b(
\zeta) \widetilde\psi(x,\zeta).
\end{eqnarray*}
The function $a$ can be continuously extended to the upper half of the
complex plane~$\mathbb{C}^+$, where it is analytic and can only have a
finite number of simple zeros located on the imaginary axis
$\zeta=i\eta$; see \cite{AC}, Lemma 2.2.2. These zeros are the
eigenvalues of the discrete spectrum, and they correspond to the
soliton components of the solution.
%
%
A pure soliton solution is given by
\[
U (t,x) = 2\eta^{2}\operatorname{sech}^{2}
\bigl(\eta \bigl(x-x(t) \bigr) \bigr),
\]
where $x(t)=x_0 + 4\eta^2 t$ is the center of the soliton.

\subsection{NLS---deterministic box-shaped initial conditions} \label
{subsec NLS det}

Let the initial condition of the NLS equation be given by $U_0(x)= q
\mathbf{1}_{[0,R]}(x)$. Burzlaff proved in \cite{Bu88} that in this
case the number of solitons generated is the integer part of
$1/2+qR/\pi$; see also the relevant discussion and generalization of
\cite{Ki89}. They remark how physical intuition suggests that the first
soliton created when increasing $R$ corresponds to $\zeta=0$ (this is a
single soliton with zero amplitude and velocity, the \textit{quiescent
soliton}); this ``soliton'' is created for $qR=\pi/2$. For values of
$qR$ just over this critical threshold the created soliton has zero
velocity and nonzero amplitude $2\eta$ which can be computed explicitly
solving (\ref{ZSSP}) for pure imaginary values of $\zeta$.

In the first part of this subsection we report some computations
relative to this case, as the results and explicit formulas will be
used below. We then conclude the subsection providing the sketch of an
analytical proof of the claimed fact that generated solitons correspond
to purely imaginary values of $\zeta$.

When the potential $U_{0}$ is of compact support in $[0,R]$ and for
purely imaginary values of $\zeta=i\eta$, from the decaying condition
of the Jost function $\Psi$ at $-\infty$, one obtains the initial
condition
%
%
\begin{equation}
\label{NLS - CI} \Psi(0) = \pmatrix{ 1
\cr
0 } e^{\eta x} \bigg|_{x=0}
= \pmatrix{ 1
\cr
0 }.
\end{equation}
The system (\ref{ZSSP}) for $x\in[0,R]$ reads
%
%
\begin{equation}
\label{ZSSP z imm q re} \cases{\displaystyle\frac{\partial\psi_1}{\partial x} = i q
\psi_2 -i \zeta\psi_1,\vspace*{6pt}
\cr
\displaystyle
\frac{\partial\psi_2}{\partial x} = i q\psi_1 +i\zeta\psi_2}
\end{equation}
and $\Psi=(\psi_1, \psi_2)$ is a solution of the initial value problem
for $\zeta\neq iq$ if
%
%
\begin{eqnarray}
\psi_1 (x) &=& -\frac{i\zeta}{\sqrt{q^2+\zeta^2}} \sin \bigl( \sqrt{q^2+
\zeta^2} x \bigr) + \cos \bigl( \sqrt{q^2+
\zeta^2} x \bigr),\label{NLSE det sq wall - sol1a}
\\
\psi_2 (x) &=& i \frac{q}{\sqrt{q^2+\zeta^2}}\sin \bigl( \sqrt{q^2+
\zeta^2} x \bigr). \label{NLSE det sq wall - sol1b}
\end{eqnarray}
To be an eigenfunction, $\Psi$ needs to be integrable and to satisfy
the final condition $\psi_{1}(R)=0$ at $R$. This condition can be
rewritten for $\zeta\neq0$ as
%
%
\begin{equation}
\label{NLSE det sq wall f} f=\tan \bigl(\sqrt{q^2+
\zeta^2} R \bigr) + i \frac{\sqrt{q^2+\zeta^2}}{\zeta} =0.
\end{equation}
Since $a(\zeta)=\psi_1(R, \zeta) e^{i\zeta R} $, the function $f$ is
linked to the first Jost coefficient $a$ by the relation
\[
f(\zeta)=i a(\zeta) \frac{e^{-i\zeta R}}{\zeta} \frac{\sqrt{q^2+\zeta
^2}}{\cos( \sqrt{q^2+\zeta^2} R )}
\]
from which we see that the zeros of $f$ coincide with those of $a$,
except for \mbox{$\zeta=iq$.} However, for $\zeta=iq$ it is possible to
compute explicitly the solution of (\ref{ZSSP z imm q re}) satisfying
the initial conditions, which is given by
\[
\Psi=\pmatrix{ 1+x
\cr
i x }.
\]
Since this function does not satisfy the final conditions, no soliton
can be created for this particular value of $\zeta$.

To prove in an analytic way that the first soliton component of the
solution corresponds to a purely imaginary value of $\zeta$, we can
proceed as follows.

Recall that the zeros of $f$ coincide with those of $a$, and observe
that the function $a(\xi,\eta,R)$ is analytic in the domain $\mathbb{R}
\times(0,\infty) \times(0,\infty)$ and continuous in $\mathbb{R}
\times[0,\infty) \times(0,\infty)$. We use the argument principle to
study how the number of zeros in the upper half of the complex plane
evolves with increasing $R$. For any fixed $R$ we proceed as in
\cite{DP08}, taking a loop $C$ in the complex $\zeta$-plane composed of
the (lower) real axis and the infinite semi-arc in the upper half
plane. Then the number of zeros is given by
\[
N=\frac{1}{2\pi} \int_C \frac{1}{a}
\frac{\partial a}{\partial\zeta}\, \dd\zeta.
\]
Since $a=1+O(1/\zeta)$ for $|\zeta|\gg1$, the integral over the upper
part of the loop is zero. Changing variables $a(\zeta)=\rho(\zeta) \exp
(i\alpha(\zeta) )$, after some computations one obtains that unless
there is a zero on the real axis, also the integral on the lower part
of the loop is zero. Therefore, the number of zeros changes for a given
$R$ only if $a(\xi,0,R)=0$ admits a solution. But zeros of $a$ and $f$
coincide, and since in equation (\ref{NLSE det sq wall f}) for real
values of $\zeta=\xi\neq0$ the tangent is real, and the second term is
purely imaginary and nonzero, solutions of $f(\xi,0,R)=0$ can only be
found at $\xi=0$.

Explicit computations easily show that $\zeta=0$ corresponds to a
soliton solution only for $R=\frac{2n+1}{2q}\pi$, $n\in\mathbb{N}$.
Computing
explicitly the derivative of $a(\zeta)$ at $\zeta=0$ we get
\begin{eqnarray*}
\partial_\zeta a(\zeta) &=& e^{i\zeta R} \biggl[ \biggl( 2
\frac{\zeta R}{ \sqrt{q^2+\zeta^2} } - i \frac{q^2}{\sqrt{q^2+\zeta^2}
^3 } \biggr) \sin \bigl(
\sqrt{q^2+\zeta^2} R \bigr)
\\
&&\hspace*{92pt} {} + i R \frac{q^2 }{q^2+\zeta^2 } \cos \bigl( \sqrt{q^2+
\zeta^2} R \bigr) \biggr],
\\
\partial_\zeta a(\zeta) |_{\zeta=0} &= & - i \frac{1}{q}
\sin(q R) + i R \cos(q R),
\end{eqnarray*}
so that for $Rq = \frac{2n+1}{2}\pi$ the derivative is equal to
$(-1)^{n+1} i/q$ and is never zero. Therefore, new solitons are
generated one at a time, and they are immediately pushed (as $R$
increases) toward the interior of the domain. Karpman \cite{Ka79}
showed that if $a(\zeta)=0$, then $a'(\zeta)\neq0$; from this fact it
follows that zeros in the interior of the domain are always simple.
Considering the complex conjugate $\Psi^*$, which is a solution
whenever $\Psi$ is, one obtains that zeros not laying on the imaginary
axis always come in pairs $\pm\xi+i\eta$. But since zeros move
continuously (as $R$ grows) in the upper complex plane, cannot coalesce
and cannot leave the imaginary axis ($\xi=0$) unless they form a pair,
we get that they must remain on the imaginary axis.

\subsection{KdV---deterministic box-shaped initial conditions} \label
{subsec KdV det}

In \cite{Mu78}, Murray obtained a $\mathcal{C}^{\infty}$ solution for
the KdV equation with a deterministic ``box-shaped'' initial condition
$U_{0}=q \mathbf{1}_{[-R,R]}(x)$. He showed that in this case the Jost
coefficient $a$~extends to an analytic function in the upper part of
the $\zeta$-plane. Only in the case of positive values of $q$, $a$ has
a finite number of zeros on the imaginary axis $\zeta=i\eta$ for
$0<\eta\le\sqrt{q}$.

We report some explicit computations on our similar deterministic case,
as the results will be used below, and study some properties of the
soliton components of the solution.

First, let us construct explicitly the eigenfunctions solution of the
deterministic equation, which we call $\varphi_0$. Take here $U_0=q
\mathbf{1}_{[0,R]}(x)$ for some $q>0$. Due to \cite{AC}, Lemma 2.2.2,
we can assume that the eigenvalue is given by $\zeta=i\eta$; we need to
solve
%
%
\begin{equation}
\label{varphi_xx} \varphi_{xx}=\cases{\eta^{2}
\varphi, &\quad $x<0$, $x>R$, \vspace*{2pt}
\cr
\bigl(-q +\eta^{2} \bigr)
\varphi, &\quad $x\in[0,R]$.}
\end{equation}
For $\eta=0$ the only integrable solution is $\varphi\equiv0$.
Eigenfunctions corresponding to $\eta>0$ must satisfy
%
%
\begin{eqnarray}
\varphi &=& c_{1} e^{\eta x},\qquad x<0, \label{CI varphi}
\\
\varphi &=& c_{2} e^{-\eta x},\qquad x>R \label{CF varphi}.
\end{eqnarray}
For $x\in[0,R]$ one can rewrite the problem as
\[
\cases{ \partial_x \varphi= \widetilde\varphi,\vspace*{2pt}
\cr
\partial_x \widetilde\varphi= \bigl(-q + \eta^{2} \bigr)
\varphi.}
\]
Note that for $\eta\ge\sqrt{q}$ the solution of (\ref{varphi_xx}) is
monotone, so that it cannot be a Jost function corresponding to a
soliton (which has to be integrable). We therefore look for solutions
corresponding to $0<\eta< \sqrt{q}$. Set $c= \sqrt{q-\eta^2}$. Due to
(\ref{CI varphi}) and (\ref{CF varphi}), we only need to solve
(\ref{varphi_xx}) for $x\in[0,R]$. From (\ref{varphi_xx}) and the
initial conditions
\[
\varphi_0(0) = c_{1}, \qquad\partial_{x}
\varphi_0(0) = \eta c_{1}
\]
derived from (\ref{CI varphi}), we get
\[
\varphi_0(x) = \alpha e^{icx} + \beta e^{-icx},
\qquad\alpha= \frac{c_1}{2} \biggl(1-i \frac{\eta}{c} \biggr), \qquad
\beta= \frac{c_1}{2} \biggl(1+i \frac{\eta}{c} \biggr),
\]
which is to say
%
%
\begin{equation}
\label{varphi0} \varphi_0 (x) = c_1 \cosh(i cx) -i
c_1 \frac{\eta}{c} \sinh(icx) = c_1 \biggl[ \cos(c x)
+ \frac{\eta}{c} \sin(c x) \biggr].
\end{equation}
We can set the global constant $c_1$ equal to $1$. Matching this
solution with the final condition (\ref{CF varphi})
\[
\cases{\displaystyle\varphi(R) = \cos(cR) + \frac{\eta}{c} \sin(cR) =
c_2 e^{-\eta R}, \vspace*{2pt}
\cr
\displaystyle
\partial_x \varphi(R) = -c \sin(cR) + \eta\cos(cR) = -\eta
c_2 e^{-\eta R},}
\]
we obtain an equation for $\eta$,
\[
\cases{ q \sin \bigl(R\sqrt{q-\eta^2} \bigr) = 2 c_2
\eta\sqrt{q-\eta^2} e^{-R \eta},\vspace*{4pt}
\cr
\cos \bigl(R
\sqrt{q- \eta^2} \bigr) = c_2 \bigl(q-2\eta^2
\bigr) e^{-R\eta}.}
\]
For $\eta=\sqrt{q/2}$, the only possible solution is such that
$R\sqrt{q-\eta^2}=\pi/2 + k\pi$, which means that
%
%
\begin{equation}
\label{sqrt(q/2)} \sqrt{q} R=(2k+1) \pi/\sqrt{2}.
\end{equation}
All other solutions can be found solving
%
%
\begin{equation}
\label{eq tan-f} f(\eta):=\tan \bigl(R\sqrt{q-\eta^2} \bigr) -
\frac{ 2 \eta\sqrt{q-\eta^2}}{q-2\eta^2} =0
\end{equation}
for $\eta\in[0,\sqrt{q})\setminus\{\sqrt{q/2}\}$. The existence and the
number of solutions for the above equation depend on the quantity
$R\sqrt{q}$. Consider some fixed value of $R$. As it is shown below,
for small values of $q$ a first soliton is created with
$\eta^{(1)}\sim0$. As $q$ increases, the value of $\eta^{(1)}$
increases too and tends to $\sqrt{q/2}$ as $q$ tends to $\pi^2/(2R^2)$.
We have already found the solution for this specific value of $q$
[$k=0$ in equation~(\ref{sqrt(q/2)})]. For $q$ larger than
$\pi^2/(2R^2)$, $\eta^{(1)}$ continues to grow. A second solution
appears $(\eta^{(2)}=0)$ when $q=\pi^2/R^2$. A third solution appears
at $q=(2\pi/R )^2$; the values of $\eta^{(i)}$ (corresponding to the
$i$th soliton created) continuously increase as $q$~grows, but remain
ordered: $\eta^{(i)}<\eta^{(j)}$ for $i>j$. Therefore, the number of
solitons created is $ \lfloor R\sqrt{q}/\pi\rfloor+1$.
%

A few examples of $f(\eta)$ are plotted in Figure~\ref{fig:KdV}. We
have taken $R=1$ and different values of $q$. The first
critical\vspace*{-1pt} points (when new solitons are created)
correspond here to $q^{(2)}=\pi^2\sim 9,87$,
$q^{(3)}=4\pi^2\sim39,48$,\vspace*{1pt} $q^{(4)}=9\pi^2\sim88,83$,
$q^{(5)}=16\pi^2$. The almost-vertical line appearing near $\eta
=\sqrt{q/2}$ for $q=44$ reflects the fact that we are near the critical
points of (\ref{sqrt(q/2)}): from (\ref{sqrt(q/2)}) for $k=1$ we have
$q=9\pi^2/2\sim44.41$.
%
\begin{figure}

\includegraphics{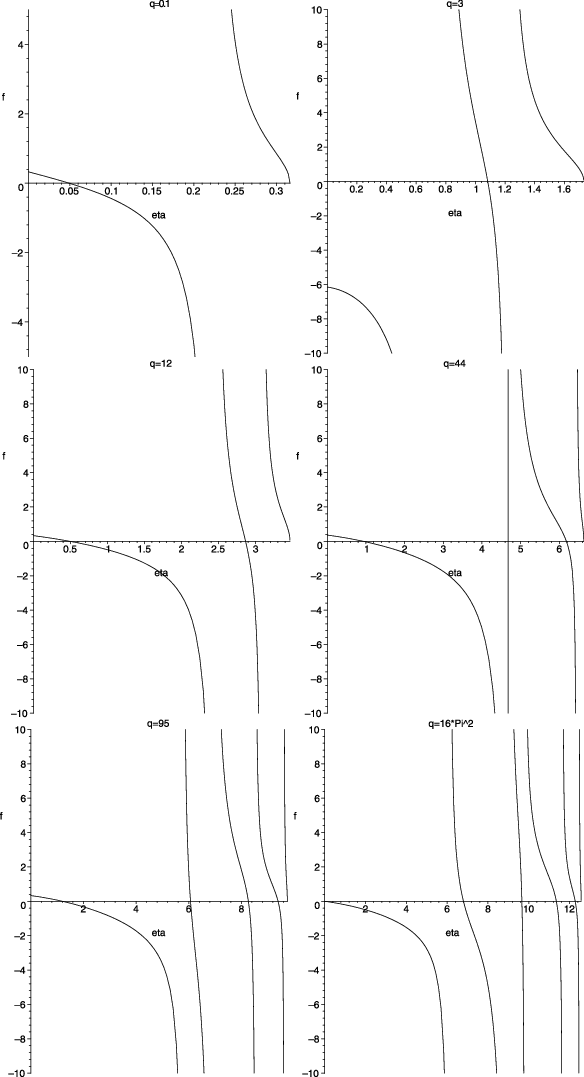}

\caption{Plot of the function $f(\eta)$ for different values of the
amplitude $q$ of the initial condition.~Each zero of $f(\eta)$
corresponds to a soliton component of the solution identified by the
complex number $i\eta$.} \label{fig:KdV}
\end{figure}

Let us take a closer look at the case $q\to0$. We assume $q=q_0
\varepsilon$ and look for the first terms of the expansion of $\eta$ in
$\varepsilon$: $\eta=\eta_0 + \eta_1 \varepsilon+ \eta_2
\varepsilon^2+O(\varepsilon^3)$. For finite values of $R$, by the above
considerations on the threshold effect the first term of the expansion
must be zero. Indeed, if we consider the expansion in $\varepsilon$ of
the function $f$ defined by (\ref{eq tan-f}), we obtain
{\renewcommand{\theequation}{ord. \arabic{equation}}
\setcounter{equation}{-1}
%
\begin{equation}
f(\eta) = \tan \Bigl(R\sqrt{- \eta_0^2} \Bigr)
+ \frac{\sqrt{-\eta_0^2} }{\eta_0}+ O(\varepsilon)
\end{equation}
and the order-zero term cannot be made equal to zero. We have therefore
$\eta=\eta_1\varepsilon+ O(\varepsilon^2)$. Looking at equation
(\ref{eq tan-f}) at first order
%
%
\begin{equation}
f(\eta) = ( R q_0 - 2\eta_1 ) \frac{\sqrt{\varepsilon}}{\sqrt{q_0}} + O
\bigl(\varepsilon^{3/2} \bigr) = 0,
\end{equation}
we obtain $\eta_1= \frac{R q_0}{2}$. Pushing the expansion further,
one can obtain the following order coefficients. At order two we have
%
%
\begin{equation}
f(\eta) = - \biggl( \frac{4}{24}R^3 q_0^2
+ 2 \eta_2 \biggr)\frac{\varepsilon^{3/2}}{\sqrt{q_0}} + O \bigl(\varepsilon^{5/2}
\bigr).
\end{equation}}\setcounter{equation}{20}%
One has then
%
%
\begin{equation}
\label{develop eta} \eta= \frac{R q_0}{2} \varepsilon- \frac{R^3 q_0^2}{12}
\varepsilon^2 + O \bigl(\varepsilon^3 \bigr)
\end{equation}
showing that in the limit $\varepsilon\to0$, $\eta$ is of the same
order of $q$.

\section{Limit of rapidly oscillating processes}\label{section W}
This section contains a rigorous justification of the use of the IST
when the initial condition contains a rapidly oscillating process. As
remarked in the \hyperref[sec1]{Introduction}, to be able to apply the IST, the initial
condition $U_0$ needs to satisfy some integrability conditions, $L^1$
for NLS and~(\ref{integrability cond KdV}) for KdV. For any
$\varepsilon>0$ these hypotheses are satisfied by initial conditions of
the form (\ref{IC intro}) if $\nu$ is bounded. Our objective is to show
that the IST applied to these random initial conditions gives a problem
that reads as a canonical system of SDEs in the limit $\varepsilon\to
0$. Thanks to the convergence result of Theorem \ref{teo convergenza
Phi e Psi} below, this limit system can be used to study the behavior
of rapidly oscillating initial conditions ($0<\varepsilon\ll1$), as we
shall do in the following sections. We stress that our interest is in
the study of rapidly oscillating initial conditions, which are
physically more relevant than the limit case of infinitely rapid
oscillations and for which the IST can be applied in a rigorous way. We
make the following assumptions (standard in the diffusion approximation
theory, \cite{FGPS}) on the process $\nu$:

%
\begin{hyp}\label{hyp nu}
Let $\nu(x)$ be a real, homogeneous, ergodic, centered, bounded, Markov
stochastic process, with finite integrated covariance\break
$\int_0^\infty\mathbb{E}[ \nu(0) \nu(x) ] \,\dd x = \alpha<\infty$ and
with generator $\mathcal{L}_\nu$ satisfying the Freedholm alternative.
\end{hyp}

Set
\[
U_0^\varepsilon:= \biggl(q + \frac{\sigma}{\varepsilon} \nu \bigl(x/
\varepsilon^2 \bigr) \biggr) \mathbf{1}_{[0,R]}(x)
\]
and note that for $x\in[0,R]$
\[
\int_0^x U_0^\varepsilon(y) \,
\dd y \stackrel{\varepsilon\to0} {\longrightarrow} \int_0^x
U_0(y) \,\dd y = qx + \sqrt{2\alpha} \sigma W_x
\]
in the space of continuous functions $\mathcal{C}^0 ([0,R];\mathbb{R}
)$, in distribution; see \cite{FGPS}. For every $\varepsilon>0$, we
apply the IST to the NLS and KdV equations with the initial condition
$U_0^\varepsilon$, and obtain the associated spectral problem. Then we
investigate the passage to the limit of this problem. We point out that
this passage to the limit is quite delicate: if it is relatively easy
to obtain a pointwise (in $\zeta$) convergence of the spectral data, to
obtain fine results for the limit case and for situations near the
limit case ($0<\varepsilon\le1$), a much stronger convergence is
needed.

We consider the ZSSP associated to the NLS equation: our goal is to
identify the points of the upper half of the complex plane, $\zeta\in
\mathbb{C}^+$, for which there exists a~solution $\Psi\in\mathbb {H}^1$
of the first order system (\ref{ZSSP}) for $x\in[0,R]$, satisfying the
boundary conditions
\[
\Psi(0)= \pmatrix{ 1
\cr
0 }\quad\mbox{and}\quad\psi_1(R)= 0
\]
derived from the exponentially decaying conditions (\ref{IC IST}).
These particular values of~$\zeta$ are the discrete eigenvalues of the
ZSSP and correspond to the soliton components.\vadjust{\goodbreak} The
strategy employed is to consider the flow $\Psi(x,\zeta)$, $x\in[0,R]$,
$\zeta\in \mathbb{C}^+$, solution of (\ref{ZSSP}) with initial
condition
%
%
\begin{equation}
\label{IC NLS} \Psi(0)= \pmatrix{ 1
\cr
0 }
\end{equation}
and look for the values of $\zeta$ for which the final condition is
satisfied.

For a fixed value of $\zeta$, we consider the solution
$\Psi^\varepsilon$ of the ZSSP obtained from the IST
%
%
\begin{equation}
\label{ZSSP approx} \cases{\displaystyle\frac{\partial\psi_1^\varepsilon
}{\partial x} =- i\zeta\psi_1^\varepsilon
+ i U_0^\varepsilon(x) \psi_2^\varepsilon,
\vspace*{6pt}
\cr
\displaystyle\frac{\partial\psi_2^\varepsilon}{\partial x} = i \bigl(U_0^\varepsilon(x)
\bigr)^* \psi_1^\varepsilon+ i\zeta\psi_2^\varepsilon}
\end{equation}
with initial condition
\[
\Psi^\varepsilon(0)= \pmatrix{ 1\cr 0 }.
\]
Now, \cite{FGPS}, Theorem 6.1, states that the process
$\Psi^\varepsilon$ converges in distribution in $\mathcal{C}^0 ([0,R];
\mathbb{C}^2)$ to the process $\Psi$ solution of
%
%
\begin{equation}
\label{NLS Ito} \cases{ \dd\psi_1 = \bigl[ \bigl(- i\zeta-\alpha
\sigma^2 \bigr) \psi_1 + i q \psi_2 \bigr] \,
\dd x + i\sqrt{2\alpha} \sigma\psi_2 \,\dd W_x,
\vspace*{4pt}
\cr
\dd\psi_2= \bigl[ i q \psi_1 + \bigl(i
\zeta-\alpha \sigma^2 \bigr) \psi_2 \bigr] \,\dd x + i
\sqrt{2\alpha} \sigma\psi_1 \,\dd W_x }
\end{equation}
with initial condition (\ref{IC NLS}). System (\ref{NLS Ito}) can be
rewritten in Stratonovich form as
%
%
\begin{equation}
\label{NLS Strat} \dd\Psi= i \pmatrix{ -\zeta& q
\cr
q & \zeta} \Psi\,\dd x + i
\sqrt{2\alpha} \sigma\pmatrix{ 0 & 1
\cr
1 & 0 } \Psi\circ \dd W_x.
\end{equation}

For NLS we can also consider perturbations produced by a complex
process: let $\nu_1, \nu_2$ be two independent copies of the process
$\nu$ and set $\widetilde\nu:= \nu_1+ i \nu_2$. One can define
$U_0^\varepsilon$ using $\widetilde\nu$ instead of $\nu$; proceeding as
above, from the IST one obtains again system (\ref{ZSSP approx}), and
from \cite{FGPS}, Theorem 6.1, one gets that in this case the limit
process is the solution of
%
%
\begin{eqnarray}\label{NLS Strat complex}
\dd\Psi &=& i \pmatrix{ - \zeta& q
\cr
q & \zeta} \Psi\,
\dd x + i\sqrt{2\alpha} \sigma \pmatrix{ 0 & 1
\cr
1 & 0 } \Psi\circ\dd
W_x^{(1)}
\nonumber\\[-8pt]\\[-8pt]
&&{} - \sqrt{2 \alpha} \sigma\pmatrix{ 0 & 1
\cr -1 & 0 } \Psi\circ\dd W_x^{(2)},\nonumber
\end{eqnarray}
where the $W^{(i)}$ are two independent Wiener processes, and with
$\Psi$ having the same initial condition (\ref{IC NLS}).

We apply the same strategy to the KdV equation: the goal is to obtain
the values of $\zeta\in\mathbb{C}^+$ for which there exists a solution
$\varphi^\varepsilon$ of
%
%
\begin{equation}
\label{ZS KdV approx} \varphi_{xx}^\varepsilon+
\bigl(U_0^\varepsilon+ \zeta^2 \bigr)
\varphi^\varepsilon=0
\end{equation}
with the boundary conditions
\[
\varphi^\varepsilon(0)= 1, \qquad\varphi_x^\varepsilon(0) = -i
\zeta, \qquad\varphi_x^\varepsilon(R) -i\zeta
\varphi^\varepsilon(R) = 0.
\]
These conditions correspond to imposing exponential decay of the
solution at infinity, so that $\zeta$ is an element of the discrete
spectrum of the spectral problem (\ref{ZS KdV approx}).

Setting $\Phi^\varepsilon:= (\varphi^\varepsilon,
\varphi_x^\varepsilon)^T$ this equation can be transformed into
%
%
\begin{equation}
\label{eq:Phi-eps} \dd\Phi^\varepsilon= \pmatrix{ 0 & 1
\cr
-U_0^\varepsilon-\zeta^2 & 0 } \Phi^\varepsilon\,
\dd x
\end{equation}
%
with boundary conditions
\[
\qquad\Phi^\varepsilon(0) = \pmatrix{ 1
\cr
-i\zeta}, \qquad
\phi_2^\varepsilon(R) - i\zeta\phi_1^\varepsilon(R)=0.
\]
We consider the flow $\Phi^\varepsilon(x,\zeta)$, $x\in[0,R]$,
$\zeta\in\mathbb{C}^+$, defined by the above equation with only the
initial condition, and look for the values of $\zeta$ s.t. the final
condition is satisfied. Again by \cite{FGPS}, Theorem 6.1,
$\Phi^\varepsilon$ converges in distribution to the solution~of
%
%
\begin{equation}
\label{eq:Phi-lim} \dd\Phi= \pmatrix{ 0 & 1
\cr
-q-\zeta^2 & 0 }
\Phi\,\dd x + \sqrt{2\alpha} \sigma\pmatrix{ 0 & 0
\cr
1 & 0 } \Phi\,\dd
W_x,
\end{equation}
which, in terms of the function $\varphi$, can be rewritten as
%
%
\begin{equation}
\label{KdV limite Intro} \dd\varphi_{x} = - \bigl(q+
\zeta^2 \bigr) \varphi\,\dd x + \sqrt{2\alpha} \sigma\varphi\,\dd
W_x.
\end{equation}
The initial condition is
%
%
\begin{equation}
\label{IC KdV} \Phi(0) = \pmatrix{ 1
\cr
-i\zeta}\quad\mbox{or equivalently}
\quad\varphi(0)= 1, \qquad\varphi_x (0) = -i \zeta.
\end{equation}
We remark that in the last two differential equations above the
Stratonovich and It\^o stochastic integrals coincide.\vadjust{\goodbreak}

The convergence obtained above is only for a (finite number of) fixed
$\zeta$ and $\sigma$, but we will need a convergence in $\mathcal{C}^0
( [0,R]; \mathcal{C}^1( \mathbb{R}^3 ) )$ to be able to differentiate
the limit process with respect to the parameters. This is the main
result of this section and it is provided by the following theorem. We
will focus on the problem of finding the values of $\zeta$ for which
the limit flows $\Psi$ and $\Phi$ match the final conditions in
Sections~\ref{sec NLS} and \ref{sec KdV}.

%
\begin{teo}\label{teo convergenza Phi e Psi}
Assume Hypothesis \ref{hyp nu}. Let $\Psi^\varepsilon:=
(\psi_1^\varepsilon, \psi_2^\varepsilon)^T$ be the solution of
(\ref{ZSSP approx}) with initial condition (\ref{IC NLS}) and $\Psi$
the solution of (\ref{NLS Strat}) with the same initial condition. Let
also $\varphi^\varepsilon$ be the solution of (\ref{ZS KdV approx})
with initial condition (\ref{IC KdV}) and $\varphi$ be the solution of
(\ref{KdV limite Intro}) with the same initial condition. Considering
these as functions of the space variable $x$ and the parameters $\xi,
\eta, \sigma$, we have in the limit of $\varepsilon\to0$ that
$\Psi^\varepsilon(x, \xi,\eta,\sigma) \to\Psi(x, \xi,\eta,\sigma)$
weakly in $\mathcal{C}^0 ( [0,R]; \mathcal{C}^1( \mathbb{R}^3;
\mathbb{C}^2 ) )$ and $\varphi^\varepsilon(x, \xi,\eta,\sigma)
\to\varphi(x, \xi,\eta,\sigma)$ weakly in $\mathcal{C}^0 ( [0,R];
\mathcal{C}^1( \mathbb{R}^3; \mathbb{C}) )$.
\end{teo}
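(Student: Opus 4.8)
The plan is to deduce the $\calC^1$-in-parameters statement from the pointwise diffusion approximation already available, by differentiating the flows in the parameters and treating the resulting enlarged system with the same tool. Write $\zeta=\xi+i\eta$ and collect the parameters as $\theta=(\xi,\eta,\sigma)$. The key structural observation is that the coefficient matrix of the linear ODE (\ref{ZSSP approx}) depends affinely on $\theta$: inside $[0,R]$ it has the form $A(\zeta)+\varepsilon^{-1}B(\sigma)\,\nu(x/\varepsilon^2)$ with $A$ linear in $\zeta$ and $B$ linear in $\sigma$. Differentiating (\ref{ZSSP approx}) formally with respect to $\xi$, $\eta$ and $\sigma$ therefore produces, for each derivative $\partial_{\theta_j}\Psi^\varepsilon$, a linear ODE driven by the \emph{same} fast coefficient, with an inhomogeneous term that is linear in $\Psi^\varepsilon$; the only derivative that feeds the fast scale back into the forcing is $\partial_\sigma$, which contributes a term $\varepsilon^{-1}(\partial_\sigma B)\,\nu(x/\varepsilon^2)\,\Psi^\varepsilon$. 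Stacking $\Theta^\varepsilon:=(\Psi^\varepsilon,\partial_\xi\Psi^\varepsilon,\partial_\eta\Psi^\varepsilon,\partial_\sigma\Psi^\varepsilon)$ into a vector in $\C^8$, I would check that $\Theta^\varepsilon$ solves $\frac{\dd}{\dd x}\Theta^\varepsilon=[\mathcal A(\theta)+\varepsilon^{-1}\mathcal B(\theta)\,\nu(x/\varepsilon^2)]\Theta^\varepsilon$ with matrices that are constant in $x$ and smooth in $\theta$, i.e.\ a system of exactly the type covered by \cite[Theorem 6.1]{FGPS}.

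Applying \cite[Theorem 6.1]{FGPS} to $\Theta^\varepsilon$ at a fixed $\theta$ then yields convergence in distribution in $\calC([0,R];\C^8)$ of the process together with its first parameter derivatives. It remains to identify the limit: because the limiting drift and diffusion coefficients produced by the diffusion approximation are themselves smooth functions of the entries of $\mathcal A(\theta),\mathcal B(\theta)$ and of the single constant $\alpha$, the limiting equation for the derivative block is exactly the formal $\theta$-derivative of (\ref{NLS Strat}). Hence the limit of $\partial_{\theta_j}\Psi^\varepsilon$ coincides with $\partial_{\theta_j}\Psi$, so differentiation in $\theta$ commutes with the $\varepsilon\to0$ limit. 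The analogous computation for (\ref{ZS KdV approx})--(\ref{KdV limite Intro}) is identical and in fact simpler, since the limit there is scalar and the It\^o and Stratonovich forms coincide.

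The remaining, and genuinely delicate, step is to upgrade this pointwise-in-$\theta$ convergence to convergence in $\calC^0\big([0,R];\calC^1(\R^3)\big)$, which is a statement of uniformity over compact sets of parameters. For this I would establish tightness of $\{\Psi^\varepsilon\}$ (and of the KdV flows) in the target space, which by an Arzel\`a--Ascoli / Kolmogorov continuity argument reduces to $\varepsilon$-uniform moment bounds of the form $\sup_\varepsilon\E\big[\sup_{x\in[0,R]}\sup_{\theta\in K}|\partial_\theta^k\Psi^\varepsilon|^p\big]<\infty$ for $k\le 2$ and $K$ compact, the $k=2$ bounds being what controls the modulus of continuity in $\theta$ of the first derivatives. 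These are obtained by running the differentiation procedure one order further, so that the second-derivative system is again a linear ODE of the same type.

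The main obstacle is precisely these $\varepsilon$-uniform a priori bounds: the $\varepsilon^{-1}$ prefactor in front of $\nu(x/\varepsilon^2)$ makes a naive Gronwall estimate diverge as $\varepsilon\to0$, so one must exploit the structure imposed by Hypothesis \ref{hyp nu}---the zero mean, boundedness, ergodicity and finite integrated covariance of $\nu$, together with the Fredholm alternative for $\calL_\nu$---to absorb the fast oscillation. Concretely, the moment estimates should be produced by the perturbed test function method underlying \cite[Theorem 6.1]{FGPS}, now applied uniformly in $\theta\in K$, the smoothness of $\theta\mapsto(\mathcal A,\mathcal B)$ guaranteeing that the resulting constants can be taken uniform over $K$. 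Once these uniform bounds and the pointwise identification of the limit are in hand, tightness plus convergence of the finite-dimensional (in $\theta$) distributions gives convergence in distribution in $\calC^0\big([0,R];\calC^1(\R^3;\C^2)\big)$ for the ZSSP flow and in $\calC^0\big([0,R];\calC^1(\R^3;\C)\big)$ for the KdV flow, as claimed.
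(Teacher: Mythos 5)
Your overall strategy coincides with the paper's in its core: differentiate the linear system in the parameters, observe that the stacked vector of the flow and its parameter derivatives again solves a linear fast--oscillation system of the form $\frac{\dd}{\dd x}Y^\varepsilon = M_1 Y^\varepsilon + \frac{1}{\varepsilon}\nu(x/\varepsilon^2)M_2 Y^\varepsilon$, identify the limit of the derivative block with the formal parameter derivative of the limit equation, and produce the $\varepsilon$-uniform a priori bounds by the perturbed test function method --- this is exactly how the paper proves the key estimate (\ref{stima unif Ye}). The genuine gap is in your tightness step. Your claimed reduction of tightness in $\calC^0\big([0,R];\calC^1(\R^3)\big)$ to $\varepsilon$-uniform bounds on $\E\big[\sup_x\sup_{\theta\in K}|\partial_\theta^k\Psi^\varepsilon|^p\big]$ fails in the $x$-direction: uniform sup-norm moment bounds say nothing about the modulus of continuity in $x$, and here the paths genuinely oscillate at scale $\varepsilon^2$ with $x$-derivative of size $\varepsilon^{-1}$ (compare $x\mapsto\sin(x/\varepsilon^2)$, which is uniformly bounded but not equicontinuous). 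A small-increment estimate is indispensable and, because of the $\varepsilon^{-1}$ factor, cannot come from Gronwall; the paper proves it as a separate step, again via a perturbed test function and a martingale decomposition, in the form of the Aldous property (\ref{eq step 1 aldous}), and then invokes the c\`adl\`ag tightness criterion of Lemma \ref{lemma Aldous}. As written, your proof never controls increments in $x$, so it does not yield convergence in $\calC^0\big([0,R];\,\cdot\,\big)$.

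A second, related problem is the moment order. A Kolmogorov continuity argument over the three-dimensional parameter domain requires increment moments of order $p>3$, whereas the perturbed-test-function machinery you invoke delivers, via quadratic variation and Doob's inequality as in the paper, only second moments; upgrading to uniform $p$-th moments for the fast system is not automatic and would need BDG-type estimates run through the whole scheme. The paper sidesteps this entirely by working in the Hilbert space $\calH=W^{3,2}(G)\hookrightarrow\calC^1(G)$ and using M\'etivier's Hilbert-space tightness criterion (Lemma \ref{lemma approx H}), for which second moments suffice --- at the price of controlling more derivatives: it bounds $\E\big[\|X^\varepsilon(x,\cdot)\|_{W^{6,2}(G)}\big]$ (derivatives up to order six, estimate (\ref{approx H 2a})) and verifies the criterion with explicitly constructed finite-dimensional subspaces of mollified piecewise polynomials (Lemma \ref{lemma tecnico approx Hn} and Corollary \ref{cor approx H}). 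So your enlarged system with derivatives only up to order two is too small for your own route unless you also establish the high-order moments, and in either case the Aldous/increment step must be added. With those two repairs your argument closes, and it essentially converges to the paper's proof.
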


To prove this theorem we need the following standard tightness
criteria; see~\cite{Me}, Chapter~2. We will use $\mathcal{D}( [0,R]; E
)$ to denote the space of CadLag processes defined for $x\in[0,R]$ and
with values in the space $E$. The first lemma is due to Aldous,
\cite{Al78}.

%
\begin{lemma}\label{lemma Aldous}
Let $(E,d)$ be a metric space, and $X^\varepsilon$ a process with paths
in $\mathcal{D}( [0,R]; E )$. If for every $x$ in a dense subset of
$[0,R]$ the family $ ( X^\varepsilon(x) )_{\varepsilon\in(0,1]}$ is
tight in $E$, and $X^\varepsilon$ satisfies the Aldous property:
\begin{longlist}[A:]
\item[A:] For any $\kappa>0$ and $\lambda>0$, there exists $\delta>0$ s.t.
\begin{eqnarray}
\limsup_{\varepsilon\to0} \sup_{\tau<R} \sup
_{0<\theta<\delta\wedge(R-\tau)} \mathbb{P} \bigl( \bigl\| X^\varepsilon(\tau +\theta) -
X^\varepsilon(\tau) \bigr\| > \lambda \bigr) <\kappa
\nonumber
\\
\eqntext{\mbox{where $\tau$ is a stopping time};}
\end{eqnarray}
\end{longlist}
then the family $ ( X^\varepsilon)_{\varepsilon\in(0,1]}$ is tight in
$\mathcal{D}( [0,R]; E )$.
\end{lemma}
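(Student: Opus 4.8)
The plan is to reduce the statement to the classical characterization of relative compactness in Skorokhod space and then to extract the required modulus-of-continuity control from the Aldous property \textbf{A}. I shall use the standard characterization of tightness in $\calD([0,R];E)$ recalled in \cite{Me84}: a family $(X^\varepsilon)_{\varepsilon\in(0,1]}$ is tight as soon as (i) for every $x$ in a dense subset of $[0,R]$ the marginals $(X^\varepsilon(x))$ are tight in $E$, and (ii) for every $\lambda,\kappa>0$ there is $\delta>0$ with $\limsup_{\varepsilon\to0}\PP\big(w'(X^\varepsilon,\delta)>\lambda\big)\le\kappa$, where $w'$ denotes the Skorokhod (càdlàg) modulus of continuity. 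Condition (i) is exactly the first hypothesis, so the whole task is to deduce (ii) from \textbf{A}; along the way the pointwise tightness in (i) is upgraded to a compact-containment statement.

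The main device is a path-adapted random grid. Fix $\lambda>0$ and define the oscillation stopping times $\sigma_0^\varepsilon:=0$ and, recursively,
\begin{equation*}
\sigma_{k+1}^\varepsilon:=\inf\big\{x>\sigma_k^\varepsilon:\ d\big(X^\varepsilon(x),X^\varepsilon(\sigma_k^\varepsilon)\big)>\lambda\big\}\wedge R .
\end{equation*}
Since $X^\varepsilon$ is càdlàg and adapted, each $\sigma_k^\varepsilon$ is a genuine stopping time, and by construction the oscillation of $X^\varepsilon$ on every interval $[\sigma_k^\varepsilon,\sigma_{k+1}^\varepsilon)$ is at most $2\lambda$. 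Consequently, if the grid $\{\sigma_k^\varepsilon\}$ has all its gaps larger than $\delta$, it is an admissible partition for $w'(\cdot,\delta)$ and yields $w'(X^\varepsilon,\delta)\le 2\lambda$ (up to merging the last short cell, which costs a further $\lambda$). Moreover, gaps bounded below by $\delta$ force the number of oscillation instants to be at most $R/\delta$, so that controlling the minimal gap simultaneously controls the number of oscillations. Thus (ii) reduces to showing
\begin{equation*}
\lim_{\delta\to0}\ \limsup_{\varepsilon\to0}\ \PP\Big(\min_{0\le k<N^\varepsilon}\big(\sigma_{k+1}^\varepsilon-\sigma_k^\varepsilon\big)\le\delta\Big)=0 ,
\end{equation*}
where $N^\varepsilon$ is the (finite) number of oscillations before $R$.

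The last probability is where property \textbf{A} enters. On the event that the $k$-th gap is short, the process moves by more than $\lambda$ over a window of length $\theta=\sigma_{k+1}^\varepsilon-\sigma_k^\varepsilon\le\delta$ started at the stopping time $\sigma_k^\varepsilon$; applying \textbf{A} with $\tau=\sigma_k^\varepsilon$ and this increment bounds $\PP\big(\sigma_k^\varepsilon<R,\ \sigma_{k+1}^\varepsilon-\sigma_k^\varepsilon\le\delta\big)$ by the Aldous modulus, uniformly in $k$. The compact-containment upgrade of (i) is obtained in the same spirit: tightness of $X^\varepsilon(0)$ together with \textbf{A} applied along the grid keeps the whole path inside a compact set with probability close to one, while tightness at an arbitrary $x$ follows by transferring, via \textbf{A}, the tightness from a nearby point of the dense set.

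The delicate point — and the step I expect to be the main obstacle — is that the bound on each individual gap must be assembled into a bound for the minimum over the random, a priori unbounded collection of indices $k$: a naive union bound over a fixed deterministic grid fails, because the number of windows grows like $R/\delta$ whereas the Aldous modulus at scale $\delta$ need not be $o(\delta)$. The resolution is to exploit \textbf{A} at the adapted times $\sigma_k^\varepsilon$ themselves and to run the gap estimate and the oscillation-count estimate together, most cleanly by the contradiction argument of Aldous: if (ii) failed there would exist $\lambda,c>0$, a sequence $\delta_\varepsilon\downarrow0$, and stopping-time pairs $\sigma_k^\varepsilon\le\sigma_{k+1}^\varepsilon\le\sigma_k^\varepsilon+\delta_\varepsilon$ with $d(X^\varepsilon(\sigma_{k+1}^\varepsilon),X^\varepsilon(\sigma_k^\varepsilon))>\lambda$ of probability at least $c$, contradicting \textbf{A}. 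The technical hearts of the argument are the measurable selection of the offending index (so that one produces a single legitimate stopping-time pair) and the càdlàg jump bookkeeping at the oscillation instants; the remaining estimates are routine, and the full details are those of the standard proof in \cite{Me84}.
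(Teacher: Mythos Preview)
The paper does not give its own proof of this lemma: it is stated as a standard tightness criterion with a reference to \cite{Me84}, and is used as a black box in the proof of Theorem~\ref{teo convergenza Phi e Psi}. So there is no ``paper's proof'' to compare against beyond the citation.

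Your sketch follows the classical Aldous argument that underlies the cited result: build the oscillation stopping times $\sigma_k^\varepsilon$, observe that $w'(X^\varepsilon,\delta)$ is controlled once all gaps $\sigma_{k+1}^\varepsilon-\sigma_k^\varepsilon$ exceed $\delta$, and use property \textbf{A} at the adapted times $\sigma_k^\varepsilon$ to rule out short gaps. You correctly identify the only genuinely delicate point---that a naive union bound over a deterministic grid of size $R/\delta$ does not close, and that one must instead argue by contradiction (or equivalently first bound the number of oscillations $N^\varepsilon$ in probability via \textbf{A}, and only then union-bound over finitely many $k$). The measurable-selection remark is slightly overstated: since $N^\varepsilon$ is bounded in probability one can simply sum over $k$ up to a deterministic cutoff, and no selection theorem is needed. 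With that caveat, the outline is correct and is precisely the content of \cite{Me84}.
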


To state the next lemma, we need to introduce some notation. If
$\mathcal{H}$ is a Hilbert space, and $\mathcal{H}_n$ a subspace of
$\mathcal{H}$, we shall use $\pi_{\mathcal{H}_n}^{}$ to denote the
projection of $\mathcal{H}$ onto $\mathcal{H}_n$. Also, $d_\mathcal
{H}$ is used to denote the distance on $\mathcal{H}$ introduced by the
inner product. 

%
\begin{lemma}\label{lemma approx H}
Let $\mathcal{H}$ be a Hilbert space and $\mathcal{H}_n$ be an
increasing sequence
of finite-dimensional subspaces of $\mathcal{H}$ s.t., for any $h\in
\mathcal{H}$,
$\lim_{n\to\infty} \pi_{\mathcal{H}_n}^{} h = h$. Let $(
Z^\varepsilon)_{\varepsilon\in(0,1]}$ be a family of $\mathcal{H}$-valued
random variables. Then the family $( Z^\varepsilon
)_{\varepsilon\in(0,1]}$ is tight if and only if for any $\kappa>0$ and
$\lambda>0$, there exist $\rho_\kappa$ and a subspace
$\mathcal{H}_{\kappa,\lambda}$ s.t.
%
%
\begin{equation}
\label{approx H} \sup_{\varepsilon\in(0,1]} \mathbb{P} \bigl( \bigl\|
Z^\varepsilon\bigr\| \ge\rho_\kappa \bigr) \le\kappa\quad\mbox{and}\quad
\sup_{\varepsilon\in(0,1]} \mathbb{P} \bigl( d_\mathcal{H}
\bigl(Z^\varepsilon, \mathcal{H}_{\kappa, \lambda} \bigr) >\lambda \bigr) \le
\kappa.
\end{equation}
\end{lemma}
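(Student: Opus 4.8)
The plan is to reduce this probabilistic statement to the classical deterministic characterization of compactness in a separable Hilbert space: a closed and bounded set $K\subset\calH$ is compact if and only if it is uniformly approximable by finite--dimensional subspaces, i.e. for every $\lambda>0$ there is a finite--dimensional subspace $G$ with $\sup_{x\in K} d_\calH(x,G)\le\lambda$. First I would establish this geometric fact. The nontrivial implication is that uniform finite--dimensional approximability, together with boundedness and closedness, yields compactness: given $\lambda>0$ pick the subspace $G$, note that $\pi_G(K)$ is a bounded subset of the finite--dimensional space $G$, hence totally bounded, cover it by finitely many balls of radius $\lambda$ centred at points of $G$, and lift these centres back by the triangle inequality to obtain a finite $2\lambda$--net of $K$; as $\lambda$ is arbitrary $K$ is totally bounded, and being closed in a complete space it is compact. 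The hypothesis that the $\calH_n$ are increasing with $\pi_{\calH_n}h\to h$ guarantees that such approximating subspaces exist and may be taken among the $\calH_n$: on a compact $K$ the maps $x\mapsto\|x-\pi_{\calH_n}x\|$ are continuous, nonincreasing in $n$ (the $\calH_n$ being nested), and tend to $0$ pointwise, so by Dini's theorem the convergence is uniform and $\sup_{x\in K}d_\calH(x,\calH_n)\to0$.

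For the forward implication I would fix $\kappa>0$, use tightness to produce a compact set $K$ with $\sup_\varepsilon\PP(X^\varepsilon\notin K)\le\kappa$, and read off the two conditions of (\ref{approx H}) from the geometry of $K$. Boundedness furnishes $\rho_\kappa:=1+\sup_{x\in K}\|x\|$, so that $\{\|X^\varepsilon\|\ge\rho_\kappa\}\subseteq\{X^\varepsilon\notin K\}$, which gives the first inequality. For the second, apply the uniform approximation from the first paragraph: given $\lambda$, choose $\calH_{\kappa,\lambda}=\calH_n$ with $\sup_{x\in K}d_\calH(x,\calH_n)\le\lambda$; then for $X^\varepsilon\in K$ one has $d_\calH(X^\varepsilon,\calH_{\kappa,\lambda})\le\lambda$, so $\{d_\calH(X^\varepsilon,\calH_{\kappa,\lambda})>\lambda\}\subseteq\{X^\varepsilon\notin K\}$ and the bound follows.

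For the converse I would build a compact set by a dyadic intersection. Fix $\kappa>0$, apply the first bound in (\ref{approx H}) at level $\kappa/2$ to obtain $\rho$ with $\sup_\varepsilon\PP(\|X^\varepsilon\|\ge\rho)\le\kappa/2$, and apply the second bound with the pairs $(\kappa\,2^{-(j+1)},\lambda_j)$, $\lambda_j\downarrow0$, to obtain finite--dimensional subspaces $G_j:=\calH_{\kappa 2^{-(j+1)},\lambda_j}$ with $\sup_\varepsilon\PP(d_\calH(X^\varepsilon,G_j)>\lambda_j)\le\kappa\,2^{-(j+1)}$. Set
\[
K:=\{x\in\calH:\|x\|\le\rho\}\cap\bigcap_{j\ge1}\{x\in\calH: d_\calH(x,G_j)\le\lambda_j\}.
\]
Then $K$ is closed and bounded, and for any $\lambda>0$, choosing $j$ with $\lambda_j\le\lambda$ gives $\sup_{x\in K}d_\calH(x,G_j)\le\lambda$ with $G_j$ finite--dimensional, so $K$ is compact by the first paragraph. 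A union bound then yields, uniformly in $\varepsilon$,
\[
\PP(X^\varepsilon\notin K)\le\PP(\|X^\varepsilon\|>\rho)+\sum_{j\ge1}\PP\big(d_\calH(X^\varepsilon,G_j)>\lambda_j\big)\le\frac{\kappa}{2}+\sum_{j\ge1}\frac{\kappa}{2^{j+1}}=\kappa,
\]
which is exactly tightness.

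The main obstacle is the geometric compactness criterion of the first paragraph and its interaction with the countable intersection in the converse: one must arrange the approximation levels $\lambda_j$ to tend to zero, so that $K$ is genuinely compact and not merely bounded, while keeping the probabilistic errors summable, so that the union bound stays below $\kappa$; and one must ensure the subspaces $\calH_{\kappa,\lambda}$ are finite--dimensional, which is precisely what the hypothesis on the exhausting sequence $\calH_n$ secures. Everything else is a routine combination of these tail bounds with the classical Hilbert--space compactness theorem.
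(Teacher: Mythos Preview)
Your argument is correct. The paper does not supply its own proof of this lemma: it is stated as one of the ``standard tightness criteria'' and attributed to M\'etivier \cite{Me84}. What you have written is precisely the standard proof of this criterion --- the reduction to the deterministic characterization of relative compactness in a separable Hilbert space via uniform finite--dimensional approximation, followed by Dini's theorem for the forward implication and a dyadic intersection for the converse. Two small remarks worth making explicit: first, the statement as written does not say in so many words that $\calH_{\kappa,\lambda}$ is finite--dimensional (or one of the $\calH_n$), but as you correctly point out this must be the intended reading, since otherwise the second condition is vacuous; second, in the Dini step you use that $x\mapsto\|x-\pi_{\calH_n}x\|$ is continuous, which holds because orthogonal projection onto a closed subspace is $1$--Lipschitz. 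With these understood, the proof is complete.
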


The proof of Lemma \ref{lemma approx H} can be found in \cite{Me84}.
For completeness we give it in \hyperref[sec:app]{Appendix}, together
with other technical results needed for the proof the Theorem~\ref{teo
convergenza Phi e Psi}.

\begin{pf*}{Proof of Theorem \ref{teo convergenza Phi e Psi}}
To unify notation, we shall use $X^\varepsilon$ to denote both
$\Psi^\varepsilon$ and $\Phi^\varepsilon$. Therefore, $X^\varepsilon$
is the solution of what we shall call the approximated system, which is
either system (\ref{ZSSP approx}) or (\ref{eq:Phi-eps}), with
$\varepsilon>0$.

Since Propositions \ref{prob reg zeta-sigma} and \ref{prob reg
eta-sigma} ensure that the limit equations for $\Psi$ and $\Phi$ have a
unique solution which is $\mathcal{C}^0 ( [0,R]; \mathcal{C}^1(
\mathbb{R}^3 ) )$, it suffices to prove convergence in the space of
CadLag processes $\mathcal{D}( [0,R]; \mathcal{C}^1( \mathbb{R}^3 ) )$.
We will do so in three steps.

Step~1 contains a technical result needed for the application in step~2
of Lemma~\ref{lemma approx H}, namely the proof of the bound (\ref
{approx H 2a}). 

In step 2, using Lemma \ref{lemma approx H}, we will show that for
every fixed $x$ the sequences $ (\Psi^\varepsilon(x) )_\varepsilon$ and
$ (\Phi^\varepsilon(x) )_\varepsilon$, denoted $ (X^\varepsilon(x)
)_\varepsilon$ in the following, are tight in the Hilbert space
$\mathcal{H}:=W^{3,2}(G)$. Note that, by Sobolev imbedding,
$\mathcal{H}\hookrightarrow\mathcal{C}^1(G)$. Here, $G$ is an open,
bounded subset of $\mathbb{R}^3$, the space of para\-meters. For
simplicity we take $G=(-N,N)^3$ for some real positive constant $N$; a
justification of the fact that it is not restrictive to assume that the
set of parameters $G$ is bounded is given below in the proof of
Proposition \ref{prob reg zeta-sigma}, where the convergences we are
proving here will be used.

In the last step we will use Lemma \ref{lemma Aldous}, where we take
$E$ to be the Hilbert space~$\mathcal{H}$. This will provide the
desired convergence of the family of processes $(X^\varepsilon
)_{\varepsilon\in(0,1]}$ in $\mathcal{D}( [0,R]; \mathcal{C}^1(
\mathbb{R}^3 ) )$.

Since $X^\varepsilon$ is the solution of a linear differential
equation 
with coefficients smooth in the parameters $\mu=(\xi, \eta, \sigma
)$, from the explicit formula for the solution, we get that
$X^\varepsilon(x,\mu)$ is smooth in the parameters. 
We will soon use its derivatives 
in the parameters: the vector of $X^\varepsilon$ and its first
derivatives in $\mu$ still satisfy a linear system of ODEs whose
coefficients depend linearly on the parameters and on the process $\nu
(x/\varepsilon^2)$, and the same result holds adding higher order derivatives.
\begin{longlist}[\textit{Step} 1]
\item[\textit{Step} 1 (A preliminary estimate).] The key point to
    show that $ ( X^\varepsilon(x,\mu) )_{\mu\in G}$ is tight in
    $\mathcal{H}=W^{3,2}(G)$ for every $x\in[0,R]$ is the proof of the
    bound
%
%
\begin{equation}\label{approx H 2a}
\limsup_{\varepsilon\to0} \mathbb{E} \bigl[ \bigl\|
X^\varepsilon(x, \cdot) \bigr\|_{W^{6,2}(G)} \bigr] \le C <\infty
\end{equation}
uniformly in $x\in[0,R]$. This is the content of this step.

Define $Y^\varepsilon$ as the vector process of $X^\varepsilon$ and all
of its derivatives in the parameters $\mu=(\xi,\eta,\sigma)$ up to
order 6. As remarked above, this process is the solution of a linear
system of ODEs with coefficients (the matrices $M_1$ and $M_2$) linear
in the parameters
\[
\frac{\dd}{\dd x} Y^\varepsilon= M_1 Y^\varepsilon+
\frac{1}{\varepsilon} \nu \bigl(x/\varepsilon^2 \bigr) M_2
Y^\varepsilon.
\]
Since $G$ is bounded, we only need to check that the second moment of
$Y^\varepsilon(x,\mu)$ is uniformly bounded with respect to
$\varepsilon\in(0,1]$, $\mu\in G$ and $x\in[0,R]$. Actually, we aim at
a stronger result, which we will need later. We are going to show that
[recall that $Y_{0}=Y_{0}^{\varepsilon}$ is deterministic since both
$\Psi^\varepsilon(0)$, $\Phi^\varepsilon(0)$ and their derivatives in
zero are defined by the equation for $x\le0$, which is deterministic,
and the boundary condition at $x\to-\infty$]
%
%
\begin{equation}
\label{stima unif Ye}
\mathbb{E} \Bigl[ \sup_{x\in[0,R]} \bigl|Y^{\varepsilon}(x) \bigr|^2 \Bigr] \le C_{R} \bigl(1+ \bigl|Y(0)
\bigr|^2 \bigr) < \infty.
\end{equation}
Following \cite{FGPS}, Section~6.3.5, we show this bound with the
perturbed test function method. Let $\mathcal{L}^\varepsilon$ be the
infinitesimal generator of the process $Y^\varepsilon$ and $\mathcal
{L}$ the infinitesimal generator of the process $Y$ obtained from the
process $X$ solution of the limit system (\ref{NLS Strat}) or
(\ref{eq:Phi-lim}) and its derivatives. Let $m\in N$ be such that
$Y^{\varepsilon}(x) \in\mathbb{C}^m$, and let $K$ be a compact subset
of $\mathbb{R}$ containing the image of the bounded process $\nu$. For
every $y\in\mathbb{C}^m$ and $z\in K$, $\mathcal{L}^\varepsilon$ has
the form
\[
\mathcal{L}^\varepsilon g(y,z) = \frac{1}{\varepsilon^2} \mathcal
{L}_\nu g(y,z) + \frac{z}{\varepsilon} (M_2 y
)^T \nabla_y g(y,z) + ( M_1 y
)^T \nabla_y g(y,z),
\]
where $\mathcal{L}_\nu$ is the infinitesimal generator of the process
$\nu$, as defined in Hypo\-thesis \ref{hyp nu}. Let $f$ be the identity
function on $\mathbb{C}^m$ and $f^{\varepsilon}(y,z) = y + \varepsilon
f_{1}(y,z)$ be the associated perturbed function, which is solution of
the Poisson equation $\mathcal{L}_{\nu} f_{1} (y,z) = - z ( M_2 y )^{T}
\nabla_y f(y)$. In this equation, $y$ plays the role of a frozen
parameter, so that $f_1$ has linear growth in $y$, uniformly in $z$,
and the same holds for
\[
\mathcal{L}^\varepsilon f^\varepsilon(y,z) = z ( M_2 y
)^T \nabla_y f_1(y,z) + \varepsilon(
M_1 y )^T \nabla_y f_1(y,z).
\]
Since
\begin{eqnarray*}
Y^{\varepsilon}(x) &= & Y^\varepsilon(0) - \varepsilon \bigl[
f_1 \bigl(Y^\varepsilon(x), \nu^\varepsilon(x) \bigr) -
f_1 \bigl(Y^\varepsilon(0), \nu^\varepsilon(0) \bigr) \bigr]
\\
&&{} + \int_0^x \mathcal{L}^\varepsilon
f^\varepsilon \bigl(Y^\varepsilon \bigl(x' \bigr),
\nu^\varepsilon \bigl(x' \bigr) \bigr) \,\dd x' +
M^\varepsilon_x,
\end{eqnarray*}
where $M^\varepsilon_x$ is a vector valued martingale, we get the bound
\begin{eqnarray*}
\sup_{x\in[0,R]} \bigl| Y^\varepsilon(x) \bigr| &\le& \bigl| Y^\varepsilon(0)
\bigr| + \varepsilon C \Bigl[ 1+ \sup_{x\in[0,R]} \bigl| Y^\varepsilon(x) \bigr|
\Bigr]
\\
&&{} + C \int_0^R 1+ \sup_{x' \in[0,x]}
\bigl| Y^\varepsilon \bigl(x' \bigr) \bigr| \,\dd x + C \sup
_{x\in[0,R]} \bigl| M^\varepsilon_x \bigr|.
\end{eqnarray*}
For $\varepsilon\le1/2C$, applying Gronwall's inequality and renaming
constants we get
%
%
\begin{eqnarray}\label{stima Ye 1}
\sup_{x\in[0,R]} \bigl| Y^\varepsilon(x) \bigr| &\le&
C_R \Bigl( 1+ \bigl| Y^\varepsilon(0) \bigr| + \sup_{x\in[0,R]}
\bigl| M^\varepsilon_x \bigr| \Bigr).
\end{eqnarray}
The quadratic variation of the martingale is given by
\[
\bigl\langle M^\varepsilon \bigr\rangle_x = \int
_0^x g^\varepsilon \bigl(Y^\varepsilon
\bigl(x' \bigr), \nu^\varepsilon \bigl(x' \bigr)
\bigr) \,\dd x',
\]
where
\begin{eqnarray*}
g^\varepsilon(y,z) &=& \bigl( \mathcal{L}^\varepsilon f^{\varepsilon
2} - 2
f^\varepsilon\mathcal{L}^\varepsilon f^\varepsilon \bigr) (y,z)
\\
&=& \bigl( \mathcal{L}_\nu f_1^2 -
2f_1 \mathcal{L}_\nu f_1 \bigr) (y,z)
\\
&&{} + 2\varepsilon z \bigl[ ( M_2 y )^T f_1
(y,z) - ( M_2 y )^T \bigl( (\nabla_y
f_1)^T f_1 \bigr) (y,z) \bigr]
\\
&&{} + 2 \varepsilon^2 \bigl[ ( M_1 y )^T
f_1 (y,z) - ( M_1 y )^T \bigl( (
\nabla_y f_1)^T f_1 \bigr) (y,z)
\bigr]
\end{eqnarray*}
has quadratic growth in $y$ uniformly in $z\in K$. Therefore, by Doob's
inequa\-lity,
\[
\mathbb{E} \Bigl[ \sup_{x\in[0,R]} \bigl| M^\varepsilon_x
\bigr|^2 \Bigr] \le C \mathbb{E} \bigl[ \bigl\langle M^\varepsilon
\bigr\rangle_R \bigr] \le C \int_0^R
1+ \mathbb{E} \bigl[ \bigl| Y^\varepsilon(x) \bigr|^2 \bigr] \,\dd x.
\]
Substituting into the expected value of the square of (\ref{stima Ye
1}) and using again Gronwall's inequality, we get (\ref{stima unif
Ye}), which gives (\ref{approx H 2a}).

\item[\textit{Step} 2 [{Tightness of $X^\varepsilon(x)$].}] In this
    step
    we show how to obtain the tightness of the family $ (
    X^\varepsilon(x,\mu) )_{\mu\in G}$ from (\ref{approx H 2a}) using
    Lemma \ref{lemma approx H}. Indeed, from this bound the first part
    of condition (\ref{approx H}) follows by the Markov inequality if
    we take $\rho_\kappa=C/\kappa$. The bound (\ref{approx H 2a})
    provides also information on the regularity of $X^\varepsilon(x)$,
    which can be used to prove the second part of condition
    (\ref{approx H}) as follows. By the Sobolev imbedding
    $W^{6,2}(G)\hookrightarrow\mathcal{C}^4(G)$ the bound (\ref{approx
    H 2a}) implies that $X^\varepsilon\in\mathcal {C}^4(G)$. An
    appropriate sequence of finite-dimensional subspaces $(\mathcal
    {H}_n)_n$ is constructed in the \hyperref[sec:app]{Appendix} in Lemma
    \ref{lemma:construc-Hn}, and Lemma \ref{lemma tecnico approx Hn}
    states that for any function $g\in\mathcal{C}^4(G)$ there exists an
    arbitrarily good approximation $g_n$ belonging to some~$\mathcal{H}_n$. Moreover, the control on the distance between $g$
    and the subspace $\mathcal{H}_n$ only depends on the norm
    $\|g\|_{\mathcal{C}^4}$, so that by (\ref{approx H 2a}) the
    approximation is uniform in~$\varepsilon$. This provides the second
    part of condition (\ref{approx H}). Finally, Corollary \ref{cor
    approx H} provides the last hypothesis of Lemma \ref{lemma approx
    H}, namely that for the sequence of subspaces $\mathcal{H}_n$
    constructed in Lemma \ref{lemma:construc-Hn} and for any
    $h\in\mathcal{H}$, $\lim_{n\to \infty} \pi_{\mathcal{H}_n}^{} h =
    h$. Then, Lemma \ref{lemma approx H} gives that for every $x$, the
    family $ ( X^\varepsilon(x,\mu) )_{\mu\in G}$ is tight in
    $\mathcal{H}=W^{3,2}(G)$.

\item[\textit{Step} 3 (Tightness of $X^\varepsilon$).] Thanks to Lemma
    \ref{lemma Aldous}, the tightness of the family of processes
    $X^\varepsilon$ in $\mathcal{D}( [0,R]; \mathcal{H} )$ follows if
    we show that the Aldous property~A holds. Since $G$ is
    bounded, we can prove the Aldous property showing that
%
%
\begin{equation}
\label{eq step 1 aldous} \lim_{\delta\to0} \limsup
_{\varepsilon\to0} \sup_{\mu\in G} \sup_{\tau\le R}
\sup_{0<\theta<\delta} \mathbb{E} \bigl[ \bigl| Y^{\varepsilon} (\tau+\theta,
\mu) - Y^\varepsilon(\tau,\mu) \bigr|^2 \bigr] =0,
\end{equation}
where $Y^\varepsilon$ is the vector process having as components
$X^\varepsilon$ and its derivatives in $\mu$ up to the third order
only. We prove the above limit using again the perturbed test function
method. With the notation introduced above, we have
\begin{eqnarray*}
&& \bigl| Y^\varepsilon(\tau+\theta) - Y^\varepsilon(\tau) \bigr|^2
\\
&&\qquad  \le C \bigl| M_{\tau+\theta}^\varepsilon- M^\varepsilon _{\tau}
\bigr|^2 + C \int_{\tau}^{\tau+\theta} \bigl|
\mathcal{L}^\varepsilon f^\varepsilon \bigl(Y^\varepsilon(x),
\nu^\varepsilon(x) \bigr) \bigr|^2 \,\dd x
\\
&&\quad\qquad{} + C\varepsilon \Bigl( 1+ \sup_{x\in[\tau,\tau+\theta]} \bigl|Y^\varepsilon(x)
\bigr|^2 \Bigr)
\\
&&\qquad \le C \bigl| M_{\tau+\theta}^\varepsilon- M^\varepsilon_{\tau}
\bigr|^2+ C \int_{\tau}^{\tau+\theta} \bigl|
\mathcal{L}^\varepsilon f^\varepsilon \bigl(Y^\varepsilon,
\nu^\varepsilon \bigr) - \mathcal{L}f \bigl(Y^\varepsilon \bigr)
\bigr|^2 \,\dd x
\\
&&\quad\qquad{} + C \int_{\tau}^{\tau+\theta} \bigl| \mathcal{L}f
\bigl(Y^\varepsilon \bigr) \bigr|^2 \,\dd x + C\varepsilon \Bigl( 1+
\sup_{x\in[\tau,\tau+\theta]} \bigl|Y^\varepsilon(x) \bigr|^2 \Bigr).
\end{eqnarray*}
We have that
\[
\mathbb{E} \bigl[ \bigl| M_{\tau+\theta}^\varepsilon- M^\varepsilon
_{\tau} \bigr|^2 \bigr] = \mathbb{E} \bigl[ \bigl(M_{\tau+\theta}^\varepsilon
\bigr)^2 - \bigl(M^\varepsilon_{\tau}
\bigr)^2 \bigr] = \mathbb{E} \biggl[ \int_{\tau}^{\tau+\theta}
\,\dd \bigl\langle M^\varepsilon \bigr\rangle_x \biggr].
\]
Since $ | \mathcal{L}^\varepsilon f^\varepsilon(y,z) - \mathcal
{L}f(y) |
\le\varepsilon C (1+ |y|)$ and $|\mathcal{L}f (y)| \le C|y|$, for
$\theta\le\delta$
\[
\mathbb{E} \bigl[ \bigl| Y^\varepsilon(\tau+\theta)- Y^\varepsilon(\tau)
\bigr|^2 \bigr] \le C_{R} (\delta+\varepsilon) \Bigl( 1+
\mathbb{E} \Bigl[\sup_{x\in[0,R]} \bigl|Y^\varepsilon(x) \bigr|^2
\Bigr] \Bigr).
\]
The right-hand side is independent of $\tau$, and we can use estimate
(\ref{stima unif Ye}) to bound it uniformly in $\varepsilon$ and $\mu$.
Therefore, (\ref{eq step 1 aldous}) follows, and the proof of Theorem
\ref{teo convergenza Phi e Psi} is complete.\quad\qed
\end{longlist}\noqed
\end{pf*}

%
\begin{oss} Using the notion of pseudo-generators, as introduced in
\cite{EK}, Section~7.4, 
it is possible to relax the conditions imposed on the driving
process~$\nu(x)$, assuming that it is just a mixing process instead of
Markov.
\end{oss}
%

\section{Stability of NLS solitons}\label{sec NLS}

This section is devoted to the study of our first example, the NLS
equation. We focus on the soliton components of the solution. In the
previous section we have obtained the limit equation (\ref{NLS Strat})
and we have remarked that every soliton component (soliton, in short)
is identified by a complex number $\zeta=\xi+i\eta$ s.t. the flow
$\Psi(x,\zeta)$ solution of (\ref{NLS Strat}) with initial condition
(\ref{IC NLS}) satisfies also a given final condition. The real and
imaginary parts of $\zeta$ define the velocity and amplitude of the
soliton, respectively. In Section~\ref{subsec NLS det} we presented
some classical results on the background deterministic solution. We
analyze now how this solution is modified by the introduction of a
real, in Section~\ref{subsec NLS W real}, or complex, in Section~\ref{subsec NLS W complex}, small-amplitude white noise perturbation of
the initial condition. The main results are contained in Propositions
\ref{prop NLS 1} and \ref{prop NLS complex}. We deal with the limit
cases of ``quiescent'' solitons in Corollary \ref{NLS new soliton} and
Remark \ref{NLS new soliton complex}.

For simplicity of exposition, in the present and following sections we
will choose the value of the integrated covariance of the process $\nu$
to be $\alpha=1/2$.

\subsection{Small-intensity real white noise} \label{subsec NLS W real}

In this subsection we consider the example of an initial condition
composed of a square function perturbed with a small real white noise.
First, we use a perturbative approach to study the effects of the
perturbation on ``true'' solitons (Proposition~\ref{prop NLS 1}). Then,
in the last part of this subsection, we study the effects of this
perturbation on a special structure called ``quiescent'' soliton
(Corollary \ref{NLS new soliton}).

We have here $U_0(x) = (q + \sigma\dot W_x ) \mathbf{1}_{[0,R]}(x)$.
The initial condition is (\ref{NLS - CI}) and the system (\ref{ZSSP})
for $x\in[0,R]$ reads
%
%
\begin{equation}
\label{ZSSP - pert} \cases{ \dd\psi_1 = i (q \psi_2 -
\zeta\psi_1 ) \,\dd x + i \sigma\psi_2 \circ\dd
W_x, \vspace*{2pt}
\cr
\dd\psi_2 = i (q \psi_1
+\zeta\psi_2 ) \,\dd x + i \sigma\psi_1 \circ\dd
W_x.}
\end{equation}

%
\begin{prop}\label{prop NLS 1}
For $qR>\frac{\pi}{2}$ and in the limit of a small, real, white
noise-type stochastic perturbation of the initial condition, the
parameter $\eta$ defining the amplitude of the soliton component of the
solution is perturbed at first order by a~small, zero-mean, Gaussian
random variable, which is given by
%
%
\begin{equation}
\label{eq pert 1 NLS} \sigma\frac{ q \sin(c_0 R) W_R + \int_0^R 2
(\eta_0q/c_0) \sin(c_0(R-y) ) \sin(c_0 y) \,\dd W_y}{2 [
q^2/c_0^2 + R \eta_0 ] \sin( c_0 R ) - R(\eta_0^2/c_0)
\cos(c_0 R ) },
\end{equation}
where $c_0:= \sqrt{q^2-\eta_0^2}$, and $\eta_0$ is the parameter
defining the amplitude of the soliton of the unperturbed system.

The velocity of the soliton remains unchanged.
\end{prop}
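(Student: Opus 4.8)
The plan is to treat the noise intensity $\sigma$ as a small parameter, expand the flow $\Psi^\sigma(x,\zeta)$ solving (\ref{ZSSP - pert}) around the deterministic background at $\sigma=0$, and then locate the perturbed eigenvalue by an implicit-function argument. Write $\zeta_0=i\eta_0$ for the unperturbed soliton, so that $\psi_1^{(0)}(R)=0$ with $\psi_1^{(0)}$ given by (\ref{NLSE det sq wall - sol1}) and $c_0=\sqrt{q^2-\eta_0^2}$. Since Theorem \ref{teo convergenza Phi e Psi} guarantees that the flow is $\calC^1$ in the parameters $(\xi,\eta,\sigma)$, I would set $\Psi^\sigma=\Psi^{(0)}+\sigma\,\Psi^{(1)}+o(\sigma)$ with $\Psi^{(1)}=\partial_\sigma\Psi^\sigma|_{\sigma=0}$, and obtain the equation for $\Psi^{(1)}$ by differentiating (\ref{ZSSP - pert}) in $\sigma$ at $\sigma=0$: a linear system with the deterministic drift of the background and a forcing $i\,\bigl(\begin{smallmatrix}0&1\\1&0\end{smallmatrix}\bigr)\Psi^{(0)}\circ\dd W_x$.

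Because this forcing is deterministic, the Stratonovich and It\^o integrals coincide at first order, and variation of constants gives
\begin{equation*}
\Psi^{(1)}(R)=\int_0^R e^{A(R-y)}\,i\begin{pmatrix}0&1\\1&0\end{pmatrix}\Psi^{(0)}(y)\,\dd W_y,\qquad A:=i\begin{pmatrix}-\zeta_0&q\\q&\zeta_0\end{pmatrix},
\end{equation*}
a Wiener integral of a deterministic kernel, hence a centered Gaussian. I would evaluate the propagator explicitly from $A^2=-c_0^2 I$, so that $e^{At}=\cos(c_0t)\,I+c_0^{-1}\sin(c_0t)\,A$, and read off the first component $\psi_1^{(1)}(R)$; product-to-sum identities collapse the integrand to a term constant in $y$ and proportional to $\sin(c_0R)$, plus a term proportional to $\sin(c_0y)\sin(c_0(R-y))$. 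Integrating the first against $\dd W_y$ produces $W_R$ and the second produces $\int_0^R\sin(c_0(R-y))\sin(c_0y)\,\dd W_y$, which are exactly the two contributions in the numerator of (\ref{eq pert 1 NLS}).

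The perturbed eigenvalue $\zeta^\sigma$ is defined by the final condition $\psi_1^\sigma(R,\zeta^\sigma)=0$. As $\psi_1$ is analytic in $\zeta$ and, at a zero of the Jost coefficient $a(\zeta)=\psi_1(R,\zeta)e^{i\zeta R}$, one has $a'(\zeta)\neq0$ by \cite{Ka79} --- equivalently $\partial_\zeta\psi_1^{(0)}(R,\zeta_0)\neq0$ --- the implicit function theorem applies and yields
\begin{equation*}
\zeta^\sigma-\zeta_0=-\sigma\,\frac{\psi_1^{(1)}(R,\zeta_0)}{\partial_\zeta\psi_1^{(0)}(R,\zeta_0)}+o(\sigma),
\end{equation*}
where differentiating (\ref{NLSE det sq wall - sol1}) in $\zeta$ at $\zeta_0=i\eta_0$ supplies the denominator of (\ref{eq pert 1 NLS}). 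The decisive step is then to track the phases at $\zeta_0=i\eta_0$: the numerator $\psi_1^{(1)}(R,\zeta_0)$ is real while $\partial_\zeta\psi_1^{(0)}(R,\zeta_0)$ is purely imaginary, so $\zeta^\sigma-\zeta_0$ is purely imaginary to first order. Its vanishing real part is precisely the statement that the velocity $\xi$ is unperturbed at first order, and its imaginary part is the claimed first-order (in $\sigma$) perturbation of the amplitude $\eta$; this variable is Gaussian as a Wiener integral and centered because $\E[W_R]=0$ and the It\^o integral is a martingale.

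I expect the main obstacle to be the rigorous justification of this implicit-function expansion in the random setting: one must show that for small $\sigma$ the random zero $\zeta^\sigma$ exists, is simple, remains in a neighborhood of $\zeta_0$, and admits the displayed first-order expansion with an $o(\sigma)$ remainder negligible in the appropriate limit (e.g. for $(\zeta^\sigma-\zeta_0)/\sigma$ in distribution). This is where the $\calC^1$-in-parameters convergence of Theorem \ref{teo convergenza Phi e Psi} is indispensable, as it allows one to differentiate the flow simultaneously in $\zeta$ and in $\sigma$ and to invoke the analytic implicit function theorem with controlled derivatives; the non-degeneracy $\partial_\zeta\psi_1^{(0)}(R,\zeta_0)\neq0$ ensuring simplicity of the zero is furnished by \cite{Ka79}.
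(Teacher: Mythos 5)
Your proposal follows essentially the same route as the paper's proof: the same linearization $\Psi^\sigma=\Psi^{(0)}+\sigma\Psi^{(1)}+o(\sigma)$ justified by $\calC^1$-dependence of the flow on parameters, the same variation-of-constants representation of $\Psi^{(1)}(R)$ as a Wiener integral of a deterministic kernel (this is exactly the paper's equation (\ref{psi NLS ord 1}) and the explicit propagator computation, including the collapse of the first component into a term proportional to $\sin(c_0 R)$ plus one proportional to $\sin(c_0(R-y))\sin(c_0 y)$), an implicit-function localization of the perturbed zero, and the same phase bookkeeping --- $\psi_1^{(1)}(R,\zeta_0)$ real, $\partial_\zeta\psi_1^{(0)}(R,\zeta_0)$ purely imaginary --- yielding $\partial_\sigma\xi=0$ and the formula (\ref{eq pert 1 NLS}). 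Your observation that the Stratonovich and It\^o forms agree at first order (the forcing involves only the deterministic $\Psi^{(0)}$, and the It\^o correction is $O(\sigma^2)$) also matches the paper's implicit handling.

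Two deviations are worth flagging, both repairable within the paper's own toolkit. First, a mis-attribution: the pathwise $\calC^1$-regularity in $(\xi,\eta,\sigma)$ that licenses the expansion is \emph{not} what Theorem \ref{teo convergenza Phi e Psi} provides --- that theorem concerns the convergence $\Psi^\varepsilon\to\Psi$ as $\varepsilon\to0$, i.e.\ the justification of the white-noise model itself; the needed regularity of the limit flow is Proposition \ref{prob reg zeta-sigma}, proved via Kunita's theory of stochastic flows \cite[Theorems 4.6.4--4.6.5]{Ku}. Second, and more substantively, you invoke the \emph{analytic} implicit function theorem in the complex variable $\zeta$ at $\sigma\neq0$; this presupposes that the random map $\zeta\mapsto\psi_1^\sigma(R,\zeta)$ is a.s.\ holomorphic for small positive $\sigma$, a fact that is plausible for this linear SDE but proved nowhere (the paper only establishes $\calC^1$ in the real parameters). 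The paper sidesteps this entirely: it applies the real implicit function theorem to $(\xi,\eta,\sigma)\mapsto(\Re F,\Im F)$ and uses holomorphy only at $\sigma=0$, where the Cauchy--Riemann identity $i\,\partial_\xi F=\partial_\eta F$ reduces the Jacobian determinant to $\big|\partial_\xi F(0,\eta_0,0)\big|^2$; the non-vanishing of $\partial_\xi F=\partial_\zeta\psi_1^{(0)}(R,\zeta_0)$, i.e.\ condition (\ref{NLS pert cond 1}), is then verified by the explicit trigonometric incompatibility argument of Lemma \ref{lemm J zeta-sigma}. Your shortcut via Karpman's result $a(\zeta)=0\Rightarrow a'(\zeta)\neq0$ from \cite{Ka79} is consistent with the paper (which cites the same fact in its deterministic discussion), but the Lemma makes the non-degeneracy self-contained for the box potential. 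With these two substitutions your argument coincides with the paper's proof.
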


%
\begin{oss}\label{oss:IC real-complex}
With the same proof, one can show that this result also holds for a
purely imaginary (deterministic) initial condition perturbed by a
purely imaginary, small white noise. And a simple phase shift
\cite{Ki89} allows to extend this result to any complex initial
condition $U_0(x)= q \mathbf{1}_{[0,R]}(x)$, $q\in\mathbb{C}$ perturbed
with a white noise with the same constant phase of $q$.
\end{oss}

%
\begin{oss}
With the same proof, it is possible to show that solitons are stable
with respect to small random perturbations under more general
hypothesis. In particular,\vadjust{\goodbreak} the perturbation need not be rapidly
oscillating. For example, substituting the white noise with a general
process $Q_x$ in Corollary \ref{NLS new soliton} below one obtains the
same result: $\partial_\sigma\eta= q \int_0^R Q_x \,\dd x$, so that a
true soliton is created whenever the integral is positive.

Indeed, another possible and equivalent approach for rapidly
oscillating processes would be to work with the original process
$\nu_\varepsilon$ for the IST and carry out the scaling limit only at
this stage.
\end{oss}

\begin{pf*}{Proof of Proposition \ref{prop NLS 1}}
Proposition \ref {prob reg zeta-sigma} ensures that equation (\ref{ZSSP
- pert}) with initial condition (\ref{NLS - CI}) defines a stochastic
flow $\Psi ^{(\zeta,\sigma)}(x)$ of $\mathcal{C}^1$-diffeomor\-phisms,
which is $\mathcal{C}^1$ also in the parameters $(\xi, \eta, \sigma)$.
Looking at the flow at point $R$ we can define a complex-valued
function of $\Psi(R)$ as $F(\xi,\eta,\sigma):= \psi_1^{(\zeta,
\sigma)}(R)$. We look for the set of values of $(\xi,\eta,\sigma)$
corresponding to zeros of the function $F$: they are the parameters
$(\zeta=\xi+i\eta)$ defining the soliton components of the solution of
the problem perturbed with a noise of amplitude $\sigma$. We claim that
a small stochastic perturbation has only the effect of a small
variation in the value of $\zeta=\xi+i\eta$ with respect to the value
$\zeta_0=i\eta_0$ of the corresponding soliton in the deterministic
case. We will prove this using the implicit function theorem: for any
fixed and sufficiently small $\sigma$, $F(\xi,\eta,\sigma)$ has a
unique zero in some open set containing the point $(0,\eta_0)$.

Since Lemma \ref{lemm J zeta-sigma} in the \hyperref[sec:app]{Appendix}
guarantees that the Jacobian matrix $J$ of the derivatives of $F$ with
respect to $\xi$ and $\eta$ is invertible, we can apply the implicit
function theorem at point $(\xi,\eta,\sigma)=(0,\eta_0,0)$. Fix
$\zeta=i\eta_0$. By Proposition \ref{prob reg zeta-sigma} the flow
defined by the system (\ref{ZSSP - pert}) is $\mathcal{C}^1$ in the
parameters, so that its derivative in~$\sigma$ coincides with the first
term of the Taylor expansion, denoted $\Psi^{(1)}$,
%
%
\begin{equation}
\label{psi NLS ord 1} \dd\Psi^{(1)} = \pmatrix{ \eta_0 & iq
\cr
iq & -\eta_0 } \Psi^{(1)} \,\dd x + i \pmatrix{ 0 & 1
\cr
1 & 0 } \Psi^{(0)} \,\dd W_x.
\end{equation}
Here, $\Psi^{(0)}$ denotes the solution of the deterministic problem
($\sigma=0$). Let $M$ be the matrix appearing in the drift term of the
above equation; the solution can be computed explicitly,
\begin{eqnarray*}
\Psi^{(1)} (x)&=& i \int_0^x \exp
\bigl(M(x-y) \bigr) \pmatrix{ 0 & 1
\cr
1 & 0 } \Psi^{(0)} (y) \,\dd
W_y,
\end{eqnarray*}
where
\begin{eqnarray*}
\bigl[\exp \bigl(M(x-y) \bigr) \Psi^{(0)} (y) \bigr]_1 &=&
i \frac{q}{c_0}\cos \bigl(c_0(x-y) \bigr) \sin(c_0
y)
\\
&&{} + 2 i \frac{\eta_0q}{c_0^2} \sin \bigl(c_0(x-y) \bigr)
\sin(c_0 y)
\\
&&{}+ i\frac{q}{c_0}\sin \bigl(c_0(x-y) \bigr)
\cos(c_0 y)
\\
&=& i\frac{q}{c_0}\sin(c_0 x) + 2 i \frac{\eta_0q}{c_0^2} \sin
\bigl(c_0(x-y) \bigr) \sin(c_0 y).
\end{eqnarray*}
It follows that
\[
\partial_\sigma F(0,\eta_0,0) = i \int_0^R
\bigl[\exp \bigl(M(R-y) \bigr) \Psi^{(0)}_y
\bigr]_1 \,\dd W_y.
\]
In the proof of Lemma \ref{lemm J zeta-sigma} the derivatives
$\partial_\xi F(0,\eta_0,0)$ and $\partial_\eta F(0,\eta_0,0)$ are
computed explicitly: the first one is imaginary pure, while the latter
is real. Set $\alpha:= \partial_\eta F (0,\eta_0,0) = [
\frac{q^2}{c_0^3} + R \frac{\eta_0}{c_0} ] \sin( c_0 R ) -
R\frac{\eta_0^2}{c_0^2} \cos(c_0 R )$.\vadjust{\goodbreak} Using the explicit formula for
the Jacobian obtained in Lemma \ref{lemm J zeta-sigma} we can write
\[
J^{-1}= \pmatrix{ 0 & -\dfrac{1}{\alpha}
\vspace*{3pt}\cr
\dfrac{1}{\alpha} & 0 }
\]
and from the formula
\[
v:=\pmatrix{ \partial_\sigma\xi
\cr
\partial_\sigma\eta} (
\sigma=0) = - J^{-1} \pmatrix{ \Re( \partial_\sigma F )
\cr
\Im( \partial_\sigma F ) } (0, \eta_0,0)
\]
we get that $\partial_\sigma\eta$ is given by (\ref{eq pert 1 NLS})
and that $\partial_\sigma\xi= 0$.
\end{pf*}

When $q=(2n+1)\pi/2R$ for some $n\in\mathbb{N}$, in the
deterministic case we
have the creation of what is sometimes called a ``quiescent'' soliton.
Also in this case, the stochastic perturbation can modify, at first
order, only the amplitude of the soliton.

%
\begin{cor}\label{NLS new soliton}
When the background deterministic solution contains a ``quiescent''
soliton, the stochastic perturbation destroys it with probability $1/2$
and transforms it into a true soliton (with a positive amplitude) with
probability~$1/2$.
\end{cor}

\begin{pf} In the case of a ``quiescent'' soliton Lemma \ref{lemm J
zeta-sigma} still holds. Indeed we have now $c_0=q$ and $qR=(2n+1)\pi
/2$, so that equation (\ref{NLS pert cond 1}) in the proof of the
lemma becomes
\[
\partial_\xi F (0,0,0) = -i \biggl[ \frac{q^2}{c_0^3} +
R \frac{\zeta_0}{c_0} \biggr] \sin( c_0 R ) +i R\frac{\zeta_0^2}{c_0^2}
\cos(c_0 R ) = \mp i \frac{1}{q} \neq0
\]
and the determinant of the Jacobian is not zero. One has then
$\alpha=\pm1/q$ and
\begin{eqnarray*}
\partial_\sigma F(0,0,0) &=& - \int_0^R
\sin( qR ) \,\dd W_y = \mp W_R,
\end{eqnarray*}
which is real. Therefore,
\begin{eqnarray*}
v:=\pmatrix{ \partial_\sigma\xi
\cr
\partial_\sigma\eta} (
\sigma=0) = - \pmatrix{ 0 & \mp q
\cr
\pm q & 0 } \pmatrix{ \mp W_R
\cr
0 } = \pmatrix{ 0
\cr
q W_R }.
\end{eqnarray*}
A true soliton is created whenever $W_R>0$.
\end{pf}

\subsection{Small-intensity complex white noise} \label{subsec NLS W complex}

The limit case of a small-ampli\-tude complex white noise is similar to
the case of the real white noise treated above. Take $U_0(x)= (q+\sigma
\dot W_x) \mathbf{1}_{[0,R]}(x)$ where $W_x= W^{(1)}_x + i W^{(2)}_x$
is a complex Wiener process. We have the following proposition.

%
\begin{prop}\label{prop NLS complex}
For $qR>\frac{\pi}{2}$ and in the limit of a small complex white
noise-type stochastic perturbation of the initial condition, the
parameters $\xi, \eta$ defining the velocity and amplitude of each
soliton are perturbed at first order by small, zero-mean, Gaussian
random variables, which are given by
\begin{eqnarray*}
\label{eq pert 2 NLS} \partial_\sigma\xi&=& - \frac{ q \sin(c_0 R)
W_R^{(2)} + \int_0^R 2 (\eta_0q/c_0^2) \sin(c_0(R-y) ) \sin
(c_0 y) \,\dd W_y^{(2)}}{2 [ q^2/c_0^2 + R \eta_0 ] \sin(c_0 R
) - R(\eta_0^2/c_0) \cos(c_0 R ) },
\\
\partial_\sigma\eta&=& \frac{q \sin(c_0 R) W_R^{(1)} + \int_0^R 2
(\eta_0q/c_0^2) \sin(c_0(R-y) ) \sin(c_0 y) \,\dd W_y^{(1)}}{2
[q^2/c_0^2 + R \eta_0 ] \sin( c_0 R ) - R(\eta
_0^2/c_0) \cos(c_0 R ) },
\end{eqnarray*}
where $c_0:= \sqrt{q^2-\eta_0^2}$, and $\eta_0$ is the parameter
defining the amplitude of the soliton of the unperturbed
system.\vspace*{-1pt}
\end{prop}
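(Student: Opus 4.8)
The plan is to follow the argument of Proposition~\ref{prop NLS 1} almost verbatim, the only genuinely new element being the computation of $\partial_\sigma F$ in the presence of two independent driving Wiener processes. As before I set $F(\xi,\eta,\sigma):=\psi_1^{(\zeta,\sigma)}(R)$, where now $\Psi^{(\zeta,\sigma)}$ is the flow of the complex--noise limit equation (\ref{NLS Strat complex}) with $\alpha=1/2$, and I look for the zero of $F$ near $(0,\eta_0,0)$ via the implicit function Theorem. The crucial simplification is that at $\sigma=0$ equation (\ref{NLS Strat complex}) collapses to the same deterministic system treated in Paragraph~\ref{subsec NLS det}; hence the background solution $\Psi^{(0)}$, the function $F(\cdot,\cdot,0)$, and the Jacobian $J=\partial_{(\xi,\eta)}(\Re F,\Im F)$ at $(0,\eta_0,0)$ are all identical to those of the real--noise case. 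Consequently Lemma~\ref{lemm J zeta-sigma} applies unchanged, $\det J(0,\eta_0,0)\neq0$, and $J^{-1}$ again has the anti--diagonal form used in the proof of Proposition~\ref{prop NLS 1}. One also needs the analogue of Proposition~\ref{prob reg zeta-sigma}, namely that the flow is $\calC^1$ in $(\xi,\eta,\sigma)$; this follows from the very same application of \cite[Theorems 4.6.4--4.6.5]{Ku}, since (\ref{NLS Strat complex}) is still linear with coefficients smooth in $\Psi$ and, on bounded parameter sets, Lipschitz in the parameters.

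Next I would compute the first--order term $\Psi^{(1)}:=\partial_\sigma\Psi|_{\sigma=0}$. Fixing $\zeta=i\eta_0$ and differentiating (\ref{NLS Strat complex}) in $\sigma$ at $\sigma=0$ gives the linear equation
\begin{equation*}
\dd\Psi^{(1)} = M\,\Psi^{(1)}\,\dd x + i\begin{pmatrix}0&1\\1&0\end{pmatrix}\Psi^{(0)}\,\dd W_x^{(1)} - \begin{pmatrix}0&1\\-1&0\end{pmatrix}\Psi^{(0)}\,\dd W_x^{(2)},
\end{equation*}
with $M=\begin{pmatrix}\eta_0&iq\\iq&-\eta_0\end{pmatrix}$ the same drift matrix as in (\ref{psi NLS ord 1}); as there, the Stratonovich and It\^o forms coincide because $\Psi^{(0)}$ is deterministic. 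Solving by variation of constants and extracting the first component at $x=R$ yields
\begin{align*}
\partial_\sigma F(0,\eta_0,0) = {}& i\int_0^R\Big[e^{M(R-y)}\begin{pmatrix}0&1\\1&0\end{pmatrix}\Psi^{(0)}(y)\Big]_1\dd W_y^{(1)} \\
&{} - \int_0^R\Big[e^{M(R-y)}\begin{pmatrix}0&1\\-1&0\end{pmatrix}\Psi^{(0)}(y)\Big]_1\dd W_y^{(2)}.
\end{align*}

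The heart of the proof is the explicit evaluation of these two integrands, for which I would use $M^2=-c_0^2 I$, whence $e^{Mt}=\cos(c_0 t)\,I+c_0^{-1}\sin(c_0 t)\,M$, together with the fact that at $\zeta=i\eta_0$ the background components $\psi_1^{(0)}$ and $\psi_2^{(0)}$ are respectively purely real and purely imaginary. The $W^{(1)}$ integrand reproduces exactly the computation of Proposition~\ref{prop NLS 1}; being purely imaginary, after multiplication by the prefactor $i$ it is purely real, so it enters only $\Re(\partial_\sigma F)$. By the same parity argument the $W^{(2)}$ integrand is again purely imaginary, but it carries no prefactor $i$, so it enters only $\Im(\partial_\sigma F)$. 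This clean separation matches the physical picture: the real part of the noise perturbs the amplitude and the imaginary part the velocity. Feeding the two pieces into
\begin{equation*}
\begin{pmatrix}\partial_\sigma\xi\\ \partial_\sigma\eta\end{pmatrix}(\sigma=0) = -\,J^{-1}\begin{pmatrix}\Re(\partial_\sigma F)\\ \Im(\partial_\sigma F)\end{pmatrix}(0,\eta_0,0)
\end{equation*}
then routes the $W^{(1)}$ contribution into $\partial_\sigma\eta$ and the $W^{(2)}$ contribution into $\partial_\sigma\xi$, producing the two stated zero--mean Gaussian stochastic integrals.

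The main obstacle is the trigonometric bookkeeping in this last step: one must carry out the matrix--exponential products and the ensuing product--to--sum reductions carefully enough both to confirm the exact real/imaginary split and to pin down the precise numerators and the common normalizing denominator. In particular, the cancellation of the $\sin(c_0 y)$ terms differs between the $W^{(1)}$ and $W^{(2)}$ integrands (because of the sign of the $-\psi_1^{(0)}$ entry), so the coefficients in the two formulas must be tracked separately rather than inferred by symmetry. Everything else is a mechanical transcription of the real--noise argument, so the only substantive work lies in these elementary but error--prone manipulations.
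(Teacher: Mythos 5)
Your proposal is essentially the paper's proof written out in full: the paper disposes of this proposition in three lines, invoking the analogue of Proposition \ref{prob reg zeta-sigma}, observing that Lemma \ref{lemm J zeta-sigma} concerns the deterministic system and so is unchanged, and declaring that from (\ref{psi NLS ord 1}) onward ``one has just to remember that $W$ is now complex''. Your regularity step, the reuse of the Jacobian, the first--order equation and the variation--of--constants formula all coincide with that plan. But your closing caveat --- that the $W^{(2)}$ coefficients must be tracked separately and not inferred by symmetry --- is not mere prudence: carrying the computation out with $e^{Mt}=\cos(c_0 t)I+c_0^{-1}\sin(c_0 t)M$ and the vector $(-\psi_2^{(0)},\psi_1^{(0)})^T$, the $\eta_0$ cross--terms \emph{cancel} rather than double, and one finds
\begin{equation*}
\Im\big(\partial_\sigma F\big)(0,\eta_0,0)=\frac{q}{c_0}\int_0^R \sin\big(c_0(R-2y)\big)\,\dd W^{(2)}_y
=\frac{q}{c_0}\Big[\sin(c_0R)\,W^{(2)}_R-2\int_0^R\cos\big(c_0(R-y)\big)\sin(c_0y)\,\dd W^{(2)}_y\Big]\,,
\end{equation*}
so that $\partial_\sigma\xi=\Im(\partial_\sigma F)/\partial_\eta F$ does \emph{not} have the structure displayed in the statement. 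The displayed $\partial_\sigma\xi$ is what one obtains by plugging the complex $W$ into the real--noise answer with the single symmetric noise matrix $i\bigl(\begin{smallmatrix}0&1\\1&0\end{smallmatrix}\bigr)$, i.e.\ by overlooking the antisymmetric matrix in (\ref{NLS Strat complex}) produced by the conjugation $U_0^*$ in the second ZSSP equation --- precisely the symmetry shortcut you refuse, and apparently the one hidden in the paper's ``remember that $W$ is now complex''. Your argument, completed as you outline it, therefore proves the qualitative statement (independent, zero--mean Gaussian first--order perturbations, with $W^{(1)}$ moving only the amplitude and $W^{(2)}$ only the velocity) but yields a corrected numerator for $\partial_\sigma\xi$; in particular the paper's subsequent remark that $\partial_\sigma\xi$ and $\partial_\sigma\eta$ have the same law would then fail in general, the two variances being different. (The remaining mismatches --- $c_0^2$ versus $c_0$ in the numerators compared with (\ref{eq pert 1 NLS}), the stray factor $2$ on the bracket in the denominator --- are transcription typos that your normalization by $\alpha=\partial_\eta F(0,\eta_0,0)$ resolves automatically.)
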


\begin{pf} This proof is similar to that of Proposition \ref{prop NLS
1}. An analogy of Proposition \ref{prob reg zeta-sigma} holds in this
setting, and Lemma \ref{lemm J zeta-sigma} remains unchanged (note
that in Lemma \ref{lemm J zeta-sigma} we work on the deterministic
equation). From equation (\ref{psi NLS ord 1}) onward one has just to
remember that $W$ is now complex.\vspace*{-1pt}
\end{pf}

%
\begin{oss}
In this case, since we used a noise with a symmetric law, the first
order perturbations of the velocity and amplitude of the soliton have
the same law and are independent. We could have taken a nonsymmetric
complex noise to perturb the initial condition: $\widetilde\nu= \nu_1
+ i\nu_2$, where $\nu_1$ and $\nu_2$ have different distributions. In
this case, the perturbations of the velocity and amplitude of the
soliton would still be independent, but not sharing the same
law.\vspace*{-1pt}
\end{oss}

%
\begin{oss}\label{NLS new soliton complex}
``Quiescent'' solitons are perturbed in both amplitude and speed,
leading to the possible creation of true solitons with nonzero
velocity. The same result holds when perturbing the initial data with
more general complex processes.\vspace*{-1pt}
\end{oss}

\section{Stability of KdV solitons}\label{sec KdV}

In this section we study our second example, the KdV equation. We focus
on the soliton components of the solution. Solitons of the KdV equation
are identified by an imaginary number $\zeta=i\eta$, defining both the
velocity and amplitude of the soliton, which are related. Recall that
in Section~\ref{subsec KdV det} we have presented some classical
results on the background deterministic solution. Using these results
in Section~\ref{subsec KdV W} we analyze how this solution is modified
by the introduction of a small-amplitude white noise perturbation of
the initial condition: the main result is contained in Proposition
\ref{prop KdV 1}, while Proposition \ref{KdV new soliton} deals with
the case of ``quiescent'' solitons. The last subsection deals with the
case of a perturbation of the zero initial condition.

\subsection{Small-amplitude random perturbation with $q>0$} \label{subsec KdV W}


Let the initial condition of the KdV equation be given by\vadjust{\goodbreak} $U_0= (q +
\sigma\dot W_{x} ) \mathbf{1}_{[0,R]}(x) $, where $W$ is a~standard
Wiener process. Since solitons correspond to zeros of the complex
extension of $a$, which in turn must be located on the imaginary axis,
we look for bounded solutions of equation (\ref{KdV lax 1}) for
$\zeta=i\eta, \eta\in\mathbb{R}^+$. The first main result is contained
in the following proposition.

%
\begin{prop}\label{prop KdV 1}
In the limit of a small, white noise-type stochastic perturbation of
the initial condition, the parameter $\eta$ defining the velocity and
amplitude of the generated soliton is perturbed at first order by a
small, zero-mean, Gaussian random variable, which is given by
%
%
\begin{equation}\label{eq ratio W-eta}
\qquad\frac{\sigma\int_0^R \varphi_0(R-x)
\varphi_0(x) \,\dd W_x} {\cos(c_0 R) [ 2+\eta_0 R - \eta_0^3
R/c_0^2 ] + \sin(c_0 R) [ (3\eta_0 + 2\eta_0^2 R)/c_0 +
\eta_0^3/c_0^3 ]}.
\end{equation}
Here, $\varphi_0$ is the deterministic solution of (\ref{varphi_xx}),
given by (\ref{varphi0}), $c_0:= \sqrt{q^2-\eta_0^2}$, and $\eta_0$ is
the parameter corresponding to the unperturbed soliton.
\end{prop}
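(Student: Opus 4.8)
The plan is to mirror the strategy of Proposition \ref{prop NLS 1}, but the situation is structurally simpler: for $\zeta=i\eta$ the limit equation (\ref{KdV limite Intro}) has real coefficients ($q+\zeta^2=q-\eta^2$) and the initial data (\ref{IC KdV}) are real, so the flow $\varphi$ stays real and the matching condition is a single scalar equation rather than the complex (two-real-component) condition of the NLS case. Matching the flow to the decaying solution $c_2e^{-\eta x}$ for $x>R$ amounts to imposing $\varphi_x(R)+\eta\varphi(R)=0$, so I would define the real-valued function
\[
G(\eta,\sigma) := \varphi_x^{(\eta,\sigma)}(R) + \eta\,\varphi^{(\eta,\sigma)}(R),
\]
whose zeros in $\eta$ are exactly the solitons at perturbation level $\sigma$. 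An analog of Proposition \ref{prob reg zeta-sigma} (the $\Phi$-system has coefficients linear in $\Phi$ and smooth in $(\eta,\sigma)$, so the same Kunita-type regularity applies) guarantees that $G$ is $\calC^1$ in $(\eta,\sigma)$, and the implicit function theorem recovers $\eta=\eta(\sigma)$ with $\eta(0)=\eta_0$ and $\partial_\sigma\eta(0)=-\partial_\sigma G(\eta_0,0)/\partial_\eta G(\eta_0,0)$.

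For the numerator I would expand $\varphi=\varphi_0+\sigma\varphi^{(1)}+\cdots$ and observe that $\Phi^{(1)}=\partial_\sigma\Phi|_{\sigma=0}$ solves the linear SDE obtained by differentiating the $\Phi$-system, with the deterministic $\Phi^{(0)}=(\varphi_0,\varphi_{0,x})^T$ driving the noise term and zero initial data (the data (\ref{IC KdV}) being $\sigma$-independent). Variation of constants against the deterministic fundamental matrix $e^{Mx}$, with $M=\left(\begin{smallmatrix}0&1\\-c_0^2&0\end{smallmatrix}\right)$, gives $\Phi^{(1)}(R)=\int_0^R e^{M(R-y)}(0,\varphi_0(y))^T\,dW_y$. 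The clean point is that
\[
\partial_\sigma G(\eta_0,0) = (\eta_0,1)\,\Phi^{(1)}(R) = \int_0^R (\eta_0,1)\,e^{M(R-y)}(0,1)^T\,\varphi_0(y)\,dW_y,
\]
and since $(\eta_0,1)e^{Mx}(0,1)^T=\cos(c_0 x)+\tfrac{\eta_0}{c_0}\sin(c_0 x)$ is precisely $\varphi_0(x)$ by (\ref{varphi0}), this collapses to the symmetric Wiener integral $\int_0^R\varphi_0(R-x)\varphi_0(x)\,dW_x$, exactly the numerator of (\ref{eq ratio W-eta}). Being a Wiener integral of a deterministic kernel, it is a centered Gaussian.

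The main obstacle is the nonvanishing of the denominator $\partial_\eta G(\eta_0,0)$, the exact analog of Lemma \ref{lemm J zeta-sigma}. Differentiating the explicit deterministic value $G(\eta,0)=\tfrac{2\eta^2-q}{c}\sin(cR)+2\eta\cos(cR)$ (with $c=\sqrt{q-\eta^2}$, $c'=-\eta/c$) reproduces precisely the bracketed denominator of (\ref{eq ratio W-eta}); the task is to show it never vanishes at a deterministic soliton. I would substitute the soliton condition (\ref{eq tan-f}), namely $\tan(c_0R)=2\eta_0c_0/(q-2\eta_0^2)$: after clearing denominators the expression telescopes, using $c_0^2+\eta_0^2=q$, to $\cos(c_0R)\,q^2(2+\eta_0 R)/\big[(q-2\eta_0^2)(q-\eta_0^2)\big]$, which cannot vanish because the soliton equation forbids $\cos(c_0R)=0$ unless $q-2\eta_0^2=0$ (and the remaining factors are strictly positive for $0<\eta_0<\sqrt q$). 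The exceptional value $\eta_0=\sqrt{q/2}$, occurring exactly at the thresholds (\ref{sqrt(q/2)}) where $\cos(c_0R)=0$, must be handled separately; there one checks directly that $\partial_\eta G(\eta_0,0)=\pm(4+2\eta_0R)\neq0$. Assembling the two pieces, $\partial_\sigma\eta(0)$ equals the ratio (\ref{eq ratio W-eta}) — the overall sign being immaterial for the centered-Gaussian statement — and since for KdV both velocity and amplitude are functions of the single parameter $\eta$, this one perturbation captures the entire first-order effect.
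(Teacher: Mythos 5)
Your proposal is correct and structurally mirrors the paper's proof: you use the same matching function (your $G$ is the paper's $F(\eta,\sigma)=\widetilde\varphi(R)+\eta\varphi(R)$), the same Kunita-type regularity input (the analog of Proposition \ref{prob reg zeta-sigma}, stated as Proposition \ref{prob reg eta-sigma}) to justify the implicit function theorem, and the same variation-of-constants computation of $\partial_\sigma F(\eta_0,0)$, including the collapsing identity $(\eta_0,1)\,e^{Mx}(0,1)^T=\cos(c_0x)+\tfrac{\eta_0}{c_0}\sin(c_0x)=\varphi_0(x)$ that produces the symmetric kernel $\int_0^R\varphi_0(R-x)\varphi_0(x)\,\dd W_x$. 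The one genuine departure is in the nondegeneracy lemma (the paper's Lemma \ref{lem J KdV}): the paper notes the sine coefficient is strictly positive, recasts $\partial_\eta F(\eta_0,0)=0$ as the tangent equation (\ref{eq tan-g}), and rules out simultaneous solutions of (\ref{eq tan-f}) and (\ref{eq tan-g}) by a sign analysis of the rational function (\ref{test}) separately on $\eta<\sqrt{q/2}$ and $\eta>\sqrt{q/2}$; you instead substitute $\tan(c_0R)=2\eta_0c_0/(q-2\eta_0^2)$ directly into $\partial_\eta F$ and telescope, via $c_0^2+\eta_0^2=q$, to the closed form $\cos(c_0R)\,q^2(2+\eta_0R)/\big[(q-2\eta_0^2)(q-\eta_0^2)\big]$ --- I verified the algebra (the $\eta_0$-free terms combine to $2q^2$ and the $R$-terms to $\eta_0Rq^2$), and it is exact. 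Your route buys an explicit evaluation of $\partial_\eta F$ at the soliton rather than a bare nonvanishing statement, and it makes the exceptional threshold $\eta_0=\sqrt{q/2}$ visible from the formula itself; both you and the paper then dispatch that case identically, since condition (\ref{sqrt(q/2)}) forces $\cos(c_0R)=0$, $\sin(c_0R)=\pm1$, giving $\partial_\eta F=\pm(4+2\eta_0R)\neq0$. Two immaterial quibbles: your parenthetical ``the remaining factors are strictly positive'' should read ``nonzero,'' since $q-2\eta_0^2<0$ for $\eta_0>\sqrt{q/2}$ (a negative factor is harmless for nonvanishing); and your sign on the stochastic term differs from the paper's derivative system (which itself carries a sign discrepancy relative to (\ref{phi-varphi})), but as you correctly note, the centered Gaussian law is symmetric, so the statement (\ref{eq ratio W-eta}) is unaffected.
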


Before we start the proof of the above proposition, we shall remark
that an analog of Proposition \ref{prob reg zeta-sigma} holds in this
setting, and it provides the uniqueness and regularity results for the
solution. The result reads in the following way:

%
\begin{prop}\label{prob reg eta-sigma}
The stochastic differential equation (\ref{phi-varphi}) below defines a
stochastic flow $\Phi^{(\eta,\sigma)}(x) = (\varphi(x),
\widetilde\varphi(x) )^T$ of $\mathcal{C}^1$-diffeomorphisms, which is
$\mathcal{C}^1$ also in the parameters $\eta,\sigma$.
\end{prop}

\begin{pf*}{Proof of Proposition \ref{prop KdV 1}}
We need to solve
%
%
\begin{equation}
\label{phi-varphi} \cases{ \dd\varphi=
\widetilde\varphi\,\dd x, \vspace*{2pt} \cr \dd\widetilde\varphi=
\bigl(-q + \eta^{2} \bigr) \varphi\,\dd x + \sigma \varphi\circ\dd
W_{x}}\vadjust{\goodbreak}
\end{equation}
for $x\in[0,R]$ with initial conditions $\varphi(0) = 1$, $\widetilde
\varphi(0)=\eta$. As we did for the NLS equation, we look at the flow
provided by Proposition \ref{prob reg eta-sigma} and define a function
of the flow at the point $x=R$ as $F(\eta,\sigma):=
\widetilde\varphi(R) + \eta\varphi(R)$: this remains a function of the
two parameters $(\eta,\sigma)$. To prove Proposition \ref{prop KdV 1}
we use the implicit function theorem to show that $F(\eta,\sigma)$ has
a unique zero in some open set containing $(\eta_{0},0)$, the point
corresponding to the deterministic solution. Since Lemma \ref{lem J
KdV} in the \hyperref[sec:app]{Appendix} guarantees that
$\partial_{\eta} F (\eta_0,0)\neq0$, we can apply the implicit function
theorem. We have (at $\eta_0$)
\[
\cases{ \dd\partial_\sigma\varphi= \partial_\sigma\widetilde
\varphi\,\dd x, \vspace*{2pt}
\cr
\dd\partial_\sigma \widetilde\varphi=
\bigl(-q+\eta_0^2 \bigr) \partial_\sigma\varphi\,
\dd x - (\varphi+ \sigma\partial_\sigma \varphi) \circ\dd
W_x.}
\]
By Proposition \ref{prob reg eta-sigma} the flow
$\Phi_x^{(\eta,\sigma)}$ is $\mathcal{C}^1$ in the parameters, so
that its
derivative in $\sigma$ at $\sigma=0$ coincides with the first term of
the Taylor expansion, which is the solution of
\[
\cases{ \dd\partial_\sigma\varphi= \partial_\sigma\widetilde
\varphi\,\dd x, \vspace*{2pt}
\cr
\dd\partial_\sigma \widetilde\varphi=
\bigl(-q+\eta_0^2 \bigr) \partial_\sigma\varphi\,
\dd x - \varphi_0 \,\dd W_x.}
\]
In matrix notation,
\[
\dd\pmatrix{ \partial_\sigma\varphi
\cr
\partial_\sigma
\widetilde\varphi} = \dd\Phi= \lleft[\matrix{ 0 & 1
\cr
-c^{2} &
0} \rright] \Phi\,\dd x - \pmatrix{ 0
\cr
\varphi_0 \,\dd
W_x }
\]
with
\begin{eqnarray*}
M&=&\lleft[\matrix{ 0 & 1 \cr -c^2 & 0} \rright],
\\
e^{Mx} &=& \lleft[\matrix{ \cos(c x) & \dfrac{1}{c} \sin(c x) \vspace*{4pt}\cr - c
\sin(c x) & \cos(c x)} \rright].
\end{eqnarray*}
The solution is
\[
\Phi(x) = \Phi(0) - \int_0^x e^{M (x-y)}
\pmatrix{ 0
\cr
\varphi_0 (y) } \,\dd W_y.
\]
We have
\begin{eqnarray*}
\partial_\sigma\varphi(x) &=& - \frac{1}{c} \int
_0^x \sin \bigl(c(x-y) \bigr) \biggl[ \cos(c y)
+ \frac{\eta}{c} \sin(c y) \biggr] \,\dd W_y,
\\
\partial_\sigma\widetilde\varphi(x) &=& - \int_0^x
\cos \bigl(c(x-y) \bigr) \biggl[ \cos(c y) + \frac{\eta}{c} \sin(c y) \biggr]
\,\dd W_y.
\end{eqnarray*}
Therefore,
\[
\partial_\sigma F (\eta_0,0) = \partial_\sigma
\widetilde\varphi(R) + \eta_0 \partial_\sigma\varphi(R) = -
\int_0^R \varphi_0(R-x)
\varphi_0 (x) \,\dd W_x
\]
and we obtain, at first order, the perturbation of the value of the
parameter $\eta$ defining the velocity and amplitude of the soliton:
for every $(q,R,\eta_{0})$,
%
%
\begin{eqnarray*}
\partial_\sigma\eta(\sigma) &=& -\frac{\partial_\sigma F}{\partial
_\eta F} (
\eta_0,0)
\\
&=& \biggl(\int_0^R \biggl[\cos \bigl(c_0(R - x) \bigr)+ \frac{\eta_0}{c_0}\sin\bigl(c_0(R
- x) \bigr) \biggr]
\\
&&\hspace*{46pt}{}\times \biggl[ \cos(c_0 x) + \frac{\eta_0}{c_0}\sin(c_0 x) \biggr] \,\dd W_x\biggr)
\\
&&{} \bigg/ \biggl(\cos(c_0 R) \biggl[ 2+\eta_0 R - \frac{\eta _0^3 R}{c_0^2} \biggr] + \sin(c_0 R)
\biggl[\frac{3\eta_0 + 2\eta_0^2 R}{c_0} + \frac{\eta_0^3}{c_0^3}\biggr]\biggr).
\end{eqnarray*}
The proposition is proved.
\end{pf*}

We turn now to consider the case when the stochastic perturbation can
result in the creation of a new soliton. As for the NLS equation, this
happens only for specific ``critical'' values of $q$ (and $R$). The
result is presented in the next proposition.

%
\begin{prop}\label{KdV new soliton}
If we are at a critical point, which is to say $\sqrt{q} R = n \pi$ for
$n\in\mathbb{N}$, $n>0$, a small-amplitude white noise-type stochastic
perturbation of the potential \textup{may} create a new small-amplitude
soliton. The condition for the creation of a new soliton is that the
zero-mean, Gaussian random variable (\ref{eq ratio W-eta 3}) is
positive.
\end{prop}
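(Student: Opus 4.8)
The plan is to specialize the perturbative analysis of Proposition \ref{prop KdV 1} to the degenerate amplitude $\eta_0=0$, exactly as Corollary \ref{NLS new soliton} specializes Proposition \ref{prop NLS 1} in the NLS case. First I would record that at a critical point $\sqrt q\,R=n\pi$ the deterministic condition $f(\eta)=0$ of (\ref{eq tan-f}) is solved by $\eta_0=0$, since $\tan(R\sqrt q)-0=\tan(n\pi)=0$; thus the soliton appearing at this threshold is born with zero amplitude. Setting $\eta_0=0$ forces $c_0=\sqrt q$ and hence $c_0R=n\pi$, so $\cos(c_0R)=(-1)^n$ and $\sin(c_0R)=0$, while the deterministic profile (\ref{varphi0}) collapses to $\varphi_0(x)=\cos(\sqrt q\,x)$.

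Next I would verify that Lemma \ref{lem J KdV} survives at the boundary value $\eta_0=0$, so that the implicit function Theorem still applies to $F(\eta,\sigma)=\widetilde\varphi(R)+\eta\varphi(R)$ near $(\eta_0,\sigma)=(0,0)$. Substituting $\eta_0=0$ into the expression for $\partial_\eta F(\eta_0,0)$ derived in the proof of Lemma \ref{lem J KdV} annihilates the sine term and leaves
\begin{equation*}
\partial_\eta F(0,0)=2\cos(\sqrt q\,R)=2(-1)^n\neq0\ .
\end{equation*}
The smoothness needed to differentiate through $\eta_0=0$ is guaranteed by Proposition \ref{prob reg eta-sigma}, since $\varphi_0$ and $F$ depend smoothly on $\eta_0$ across $0$ (the apparent singularity $\eta_0/c_0$ in (\ref{varphi0}) is removable).

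I would then evaluate $\partial_\sigma F(0,0)$ from the integral representation obtained in the proof of Proposition \ref{prop KdV 1}. With $\varphi_0(x)=\cos(\sqrt q\,x)$ and using $\cos\big(\sqrt q\,(R-x)\big)=(-1)^n\cos(\sqrt q\,x)$, which follows from $\sqrt q\,R=n\pi$, one gets
\begin{equation*}
\partial_\sigma F(0,0)=-\int_0^R \varphi_0(R-x)\,\varphi_0(x)\,\dd W_x=-(-1)^n\int_0^R \cos^2(\sqrt q\,x)\,\dd W_x\ .
\end{equation*}
Dividing, the first order amplitude perturbation becomes
\begin{equation*}
\partial_\sigma\eta=-\frac{\partial_\sigma F}{\partial_\eta F}(0,0)=\frac12\int_0^R \cos^2(\sqrt q\,x)\,\dd W_x\ ,
\end{equation*}
a zero-mean Gaussian variable with strictly positive variance; this is the quantity (\ref{eq ratio W-eta 3}).

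The step I expect to be genuinely delicate is the \emph{interpretation}, not the computation. Because a KdV soliton requires $\eta>0$, the point $\eta_0=0$ lies on the boundary of the admissible region rather than in its interior, so—unlike the true-soliton case of Proposition \ref{prop KdV 1}—a small perturbation does not merely displace an existing eigenvalue. The implicit function Theorem yields a smooth curve $\eta(\sigma)=\sigma\,\partial_\sigma\eta+o(\sigma)$ through the origin, and for $\sigma>0$ this corresponds to an actual discrete eigenvalue $\zeta=i\eta$ in the upper half $\zeta$--plane only when $\partial_\sigma\eta>0$; when $\partial_\sigma\eta<0$ the zero of $a$ crosses into the lower half-plane and no bound state is created. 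Hence the new soliton appears precisely on the event $\{\partial_\sigma\eta>0\}$, i.e. when the Gaussian variable (\ref{eq ratio W-eta 3}) is positive, which—being a non-degenerate centered Gaussian—occurs with probability $1/2$.
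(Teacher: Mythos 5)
Your proposal is correct and follows essentially the same route as the paper: apply the implicit function Theorem to $F(\eta,\sigma)$ at $(0,0)$, compute $\partial_\eta F(0,0)=2\cos(\sqrt q\,R)=\pm2$ and $\partial_\sigma F(0,0)=-\int_0^R\varphi_0(R-x)\varphi_0(x)\,\dd W_x$ with $\varphi_0(x)=\cos(\sqrt q\,x)$, and simplify via $\sin(\sqrt q\,R)=0$ to obtain $\partial_\sigma\eta=\frac12\int_0^R\cos^2(\sqrt q\,x)\,\dd W_x$, exactly the quantity (\ref{eq ratio W-eta 3}). Your closing paragraph on the boundary interpretation ($\eta_0=0$ lying on the edge of the admissible region, so a soliton exists only on the event $\{\partial_\sigma\eta>0\}$) is a welcome elaboration of what the paper states more tersely, but it does not change the argument.
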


\begin{pf} For a generic $\eta_0$, the deterministic background
solution is ($c_0=\sqrt{q-\eta^2_0}$)
%
%
\begin{equation}
\label{det sol KdV} \varphi_0 = \cos(c_0 x) +
\frac{\eta_0}{c_0} \sin(c_0 x).
\end{equation}
We have $U_0=q + \sigma\dot W_x$; we want to apply the implicit
function theorem to the function $F(\eta,\sigma)$ defined above, at the
point $(0,0)$. At this point
\begin{eqnarray*}
\partial_{\eta}\varphi(R) &=& \frac{1}{\sqrt{q}} \sin(\sqrt{q} R) =0,
\\
\partial_\eta\widetilde\varphi(R) &=& \cos(\sqrt{q} R) = \pm1.
\end{eqnarray*}
Therefore
\begin{eqnarray*}
\partial_{\eta} F (0,0) &=& \partial_{\eta} \widetilde\varphi+
\varphi+ \eta_0 \partial_{\eta} \varphi= 2 \cos(\sqrt{q} R)
= \pm2
\end{eqnarray*}
and the implicit function theorem is applicable. We also have (again at
$\eta_0=0$)
\[
\cases{\dd\partial_\sigma
\varphi= \partial_\sigma\widetilde\varphi\,\dd x, \vspace*{2pt}
\cr
\dd
\partial_\sigma\widetilde\varphi= -q \partial_\sigma\varphi\,\dd
x - (\varphi+ \sigma\partial_\sigma\varphi) \circ\dd W_x.}
\]
As seen above, the solution of the above system coincides at $\sigma=0$
with the solution of
\[
\cases{ \dd\partial_\sigma\varphi= \partial_\sigma\widetilde
\varphi\,\dd x, \vspace*{2pt}
\cr
\dd\partial_\sigma\widetilde\varphi= -q
\partial_\sigma\varphi\,\dd x - \varphi_0 \,\dd
W_x,}
\]
which is given by (here, $c=\sqrt{q}$)
\begin{eqnarray*}
\partial_\sigma\varphi(x) &=& - \frac{1}{\sqrt{q}} \int
_0^x \sin \bigl(\sqrt{q}(x-y) \bigr) \cos(
\sqrt{q} y) \,\dd W_y,
\\
\partial_\sigma\widetilde\varphi(x) &=& - \int_0^x
\cos \bigl(\sqrt{q}(x-y) \bigr) \cos(\sqrt{q} y) \,\dd W_y.
\end{eqnarray*}
We have therefore
\begin{eqnarray*}
\partial_\sigma F (0,0) &=& - \int_0^R
\cos \bigl(\sqrt{q} (R-x) \bigr) \cos(\sqrt{q} x) \,\dd W_x
\end{eqnarray*}
and since $\cos(\sqrt{q} R)=\pm1$
%
%
\begin{eqnarray}
\label{eq ratio W-eta 3} v&:=& \partial_\sigma\eta(\sigma) = -
\frac{\partial_\sigma
F}{\partial_\eta F} (0,0) = \frac{\int_0^R \cos(\sqrt{q} (R-y) ) \cos(\sqrt{q} y) \,\dd W_y} {2
\cos(\sqrt{q} R)}
\nonumber\\[-8pt]\\[-8pt]
&=& \frac{1}{2} \int_0^R
\cos^2(\sqrt{q}x) \,\dd W_x.\nonumber
\end{eqnarray}
In this case, a new small-amplitude soliton, corresponding to $\eta=
\sigma v$, is created whenever $v>0$.
\end{pf}

\subsection{Small-amplitude random perturbations with $q=0$}

We analyze now the case in which the initial condition is the pure
stochastic perturbation: contrarily to the NLS equation, even this
small initial condition can generate a soliton. In this setting there
is no (nontrivial) solution in the deterministic case. This case is
obtained at the critical point $\sqrt{q} R=0$, where the first
quiescent soliton is created.

%
\begin{prop}\label{prop KdV 2}
For $q=0$, a small-amplitude stochastic perturbation of the potential
\textup{may} create a new small-amplitude soliton. For a white noise-type
perturbation $\dot W_x$, a new soliton is created if $W_R>0$ (which is
an event of probability $1/2$) and the created soliton corresponds to
the value $\eta=\sigma W_{R}$.
\end{prop}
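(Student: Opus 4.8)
The plan is to reproduce, in the degenerate case $q=0$, the implicit-function-theorem scheme of Propositions \ref{prop KdV 1} and \ref{KdV new soliton}; indeed $q=0$ is precisely the boundary critical point $\sqrt{q}\,R=0$ (the case $n=0$), so the argument should run in parallel and the answer should be the $q\to0$ specialization of (\ref{eq ratio W-eta 3}). Concretely, I consider the flow of (\ref{phi-varphi}) with $q=0$ and form the same function $F(\eta,\sigma):=\widetilde\varphi(R)+\eta\,\varphi(R)$, whose zeros locate the eigenvalues $\zeta=i\eta$. An analogue of Proposition \ref{prob reg eta-sigma} goes through verbatim (the local characteristic is still linear in the state with bounded, Lipschitz coefficients on bounded parameter sets), so $F$ is jointly $\mathcal{C}^1$ in $(\eta,\sigma)$ near $(0,0)$ and the $\sigma$-derivative of the flow at $\sigma=0$ is its first Taylor coefficient. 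I then apply the implicit function theorem at $(\eta,\sigma)=(0,0)$, the point corresponding to the trivial deterministic background.

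For the Jacobian hypothesis I record the deterministic background at $(0,0)$: with $q=0$ the equation (\ref{varphi_xx}) reads $\varphi_{xx}=\eta^2\varphi$ with $\varphi(0)=1$, $\widetilde\varphi(0)=\eta$, so $\varphi_0(x)=e^{\eta x}$ and $\widetilde\varphi_0(x)=\eta e^{\eta x}$; at $\eta=0$ this is simply $\varphi_0\equiv1$, $\widetilde\varphi_0\equiv0$. Differentiating in $\eta$ and evaluating at $\eta=0$ gives $\partial_\eta\varphi(R)=R$ and $\partial_\eta\widetilde\varphi(R)=1$, whence
\[
\partial_\eta F(0,0)=\big[\partial_\eta\widetilde\varphi(R)+\varphi(R)+\eta\,\partial_\eta\varphi(R)\big]_{\eta=0}=1+1=2\neq0,
\]
the $q\to0$, $\eta_0\to0$ limit of the expression for $\partial_\eta F$ in Lemma \ref{lem J KdV}. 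Thus the implicit function theorem yields a unique $\mathcal{C}^1$ branch $\eta(\sigma)$ with $\eta(0)=0$.

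It remains to compute $\partial_\sigma F(0,0)$. As before, the first-order term in $\sigma$ solves the linear variational system obtained by differentiating (\ref{phi-varphi}) at $\sigma=0$, $\eta=0$, $q=0$; here $c=\sqrt{q-\eta^2}=0$ and $\varphi_0\equiv1$, so the drift matrix is $M=\left(\begin{smallmatrix}0&1\\0&0\end{smallmatrix}\right)$ with $e^{Mx}=\left(\begin{smallmatrix}1&x\\0&1\end{smallmatrix}\right)$, and the system integrates explicitly to $\partial_\sigma\widetilde\varphi(x)=-W_x$ and $\partial_\sigma\varphi(x)=-\int_0^x(x-y)\,dW_y$. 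Hence $\partial_\sigma F(0,0)=\partial_\sigma\widetilde\varphi(R)=-W_R$ and
\[
\partial_\sigma\eta(0)=-\frac{\partial_\sigma F}{\partial_\eta F}(0,0)=\tfrac12 W_R,
\]
which is exactly (\ref{eq ratio W-eta 3}) evaluated at $\sqrt{q}\,R=0$. Since the zero $\zeta=i\eta$ is a genuine soliton only when it lies in the open upper half plane, that is when $\eta>0$, a soliton is created precisely on the event that $W_R>0$, which by symmetry of the centered Gaussian $W_R$ occurs with probability $1/2$.

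The main point to argue carefully, and the reason the statement reads \emph{may} create, is this last dichotomy: the base point $(\eta,\sigma)=(0,0)$ sits on the boundary $\eta=0$ of the admissible region (the deterministic $q=0$ problem has no nontrivial soliton), so the branch $\eta(\sigma)$ furnished by the implicit function theorem describes an honest soliton only on the half of the probability space where $\eta(\sigma)>0$, while on the complement the perturbed eigenvalue leaves the physical region. I therefore have to justify that a soliton is present if and only if $\eta(\sigma)>0$, and that for $\sigma$ small the sign of the first-order coefficient $\tfrac12 W_R$ decides this — which is where the $\mathcal{C}^1$ regularity of the branch is used.
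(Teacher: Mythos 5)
Your proof follows the paper's own route step for step: the same function $F(\eta,\sigma)=\widetilde\varphi(R)+\eta\,\varphi(R)$, the implicit function theorem at $(\eta,\sigma)=(0,0)$ with regularity supplied by the analogue of Proposition \ref{prob reg eta-sigma}, the explicit integration of the variational system giving $\partial_\sigma F(0,0)=-W_R$, and the sign dichotomy on $W_R$ yielding probability $1/2$. The one substantive difference is your Jacobian value $\partial_\eta F(0,0)=2$, where the paper's proof asserts $\partial_\eta F(0,0)=1$ --- and here you are right and the paper is not. Since the initial condition forces $\widetilde\varphi(0)=\eta$, the deterministic flow at $q=0$, $\sigma=0$ is $\varphi(x)=e^{\eta x}$, $\widetilde\varphi(x)=\eta e^{\eta x}$, hence $F(\eta,0)=2\eta e^{\eta R}$ and $\partial_\eta F(0,0)=2$; the paper drops the contribution $\partial_\eta\widetilde\varphi(R)=1$ when it writes $\partial_\eta F=\partial_\eta\widetilde\varphi+\varphi+\eta\,\partial_\eta\varphi=1$. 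Your resulting coefficient $\partial_\sigma\eta(0)=\tfrac12 W_R$ is the internally consistent one: it is exactly the $n=0$ (i.e.\ $q\to0$) specialization of (\ref{eq ratio W-eta 3}), where the paper itself correctly computes $\partial_\eta F(0,0)=2\cos(\sqrt{q}\,R)$ in Proposition \ref{KdV new soliton}, and it matches the classical weak--coupling asymptotics for a shallow one--dimensional well, $\eta\sim\frac{\sigma}{2}\int_0^R Q_x\,\dd x$. So the normalization ``$\eta=\sigma W_R$'' in the statement (and the factor in the subsequent remark on general processes $Q_x$, with its mass--balance consequences) carries a spurious factor $2$, while the qualitative content --- creation exactly on $\{W_R>0\}$, hence with probability $1/2$ --- is untouched.

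Two smaller points. First, you adopted the noise sign $-\varphi_0\,\dd W_x$ from the paper's variational system rather than the $+\sigma\varphi\circ\dd W_x$ printed in (\ref{phi-varphi}); the former is the one consistent with (\ref{KdV lax 1}) (which gives $\varphi_{xx}=(\eta^2-U_0)\varphi$), and in any case the choice is immaterial in law since $-W$ is again a Brownian motion. Second, your closing caveat --- that the implicit--function branch is a genuine eigenvalue only on the event $\eta(\sigma)>0$, so that the $\mathcal{C}^1$ regularity is what lets the sign of the first--order coefficient decide the dichotomy for small $\sigma$ --- is treated at exactly the same implicit level in the paper, which likewise identifies soliton creation with positivity of the first--order term without further comment; there is no gap there relative to the paper's own argument.
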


\begin{pf} Again, we want to use the implicit function theorem; take
the limit of the deterministic solution (\ref{det sol KdV}) as $q\to0$,
\[
\varphi_0=1, \qquad\qquad\widetilde\varphi_0 =0.
\]
We have
\[
\partial_{\eta} F (0,0) = \partial_{\eta} \widetilde\varphi+
\varphi+ \eta_0 \partial_{\eta} \varphi= 1
\]
and ($\eta_0=0$)
\begin{eqnarray*}
&\cases{ \dd\varphi= \widetilde\varphi\,\dd x, \vspace*{2pt}
\cr
\dd\widetilde
\varphi= -\sigma\varphi\circ\dd W_{x},}&
\\
&\cases{ \dd\partial_\sigma\varphi= \partial_\sigma\widetilde
\varphi\,\dd x, \vspace*{2pt}
\cr
\dd\partial_\sigma\widetilde\varphi= - (
\varphi+ \sigma\partial_\sigma\varphi) \circ\dd W_x.}&
\end{eqnarray*}
Therefore,
\[
\partial_\sigma F (0,0) = \partial_\sigma\widetilde\varphi+
\eta_0 \partial_\sigma\varphi= - \int_0^R
\varphi\,\dd W_x = -W_R.
\]
It follows that
\[
\partial_\sigma\eta(\sigma) = -\frac{\partial_\sigma F }{\partial
_\eta F } (0,0) =
W_R
\]
and a single soliton corresponding to $\eta=\sigma W_R$ is generated
whenever $W_R>0$, which means with probability $\frac{1}{2}$.\vadjust{\goodbreak}
\end{pf}

%
\begin{oss}
The same result of Proposition \ref{prop KdV 2} holds with more general
processes: if, instead of a white noise, we take $U_0 (x)=\sigma
Q_x$ in (\ref{varphi_xx}), where $Q_x$ is a generic stochastic process,
we have (at $\eta=0$)
\begin{eqnarray*}
&\cases{ \dd\partial_\sigma\varphi= \partial_\sigma\widetilde
\varphi\,\dd x, \vspace*{2pt}
\cr
\dd\partial_\sigma\widetilde\varphi= -
Q_x (\varphi+ \sigma\partial_\sigma\varphi) \,\dd x,}&
\\
& \displaystyle\partial_\sigma F (0,0) = \partial_\sigma\widetilde\varphi+
\eta_0 \partial_\sigma\varphi= - \int_0^R
Q_x \,\dd x&
\end{eqnarray*}
and
\[
\label{eq q} \partial_\sigma\eta(\sigma) = -\frac{\partial
_\sigma F }{\partial_\eta F } (0,0) =
\int_0^R Q_x \,\dd x.
\]
In this case, a single soliton is created whenever the noise introduced
has positive mean, and it corresponds to $\eta=\sigma\int_0^R Q_x \,\dd
x $.
\end{oss}

%
\begin{oss} The mass of a soliton of the KdV equation is $M_\eta=\break \int_\mathbb{R} U(x) \,\dd x = 4\eta$. Here, we have introduced a
perturbation of mass $M_{U_0} = \sigma\int_\mathbb{R} Q_x \,\dd x =
\eta$. We see therefore that the soliton created has a larger mass than
the initial perturbation, implying that the radiative part (going in
the direction opposite to that of the soliton) has absorbed a total
mass of $3\eta$.

The energy conversion efficiency from a small noise source to a soliton
is very poor, since the input energy is of order $\sigma^2$ ($E_{U_0}=
\sigma^2 \int_\mathbb{R} Q_x^2 \,\dd x$), while the energy of the
created soliton is only $E_\eta= \int_\mathbb{R} U^2(x) \,\dd x =
\frac{16}{3}\eta^3\sim\sigma^3$, for a smooth source. For a white noise
source, the input energy is infinite, while the energy of the created
soliton is finite.
\end{oss}
%

\section{Conclusion and comments}

For the NLS and KdV equations, the study of soliton emergence from a
localized, bounded initial condition perturbed by a~wide class of
rapidly oscillating random processes can be reduced to the study of
a~canonical system of SDEs, formally corresponding to the white noise
perturbation of the initial condition. The integrated covariance is the
only parameter of the perturbation process that influences the limit
system of SDEs. From the study of this limit system, one obtains
quantitative information on the modification of solitons due to the
random perturbation.

For the NLS equation we have a threshold effect: if for the
deterministic initial condition the integral $\int_\mathbb{R} U_0(x)
\,\dd x$ exceeds $\pi/2$, at least one soliton is created. In this
case, a small-amplitude random perturbation of the initial condition
results in a small variation in amplitude for the created soliton.
However, if the phase of the (possibly complex) perturbation is
constant and equal to the phase of the deterministic background initial
condition, the speed of the created soliton is not modified. On the
contrary, a complex perturbation with varying phase can modify both the
amplitudes and speeds of solitons.\vadjust{\goodbreak}

Since the KdV equation does not present a threshold phenomenon, a
small-amplitude random perturbation always results in a small variation in
both speed and amplitude of the created soliton, for any nonnegative
initial condition.

As for a ``quiescent'' soliton, for both NLS and KdV a stochastic
perturbation has a positive probability (depending on the type of
perturbation used) of creating a new real soliton.

The results of Section~\ref{section W} on the canonical system of SDEs
for rapidly oscillating processes holds for any value $\sigma$ of the
amplitude of the perturbation, and the results on the stability of
solitons with respect to small random perturbations ($\sigma\ll1$) hold
also without the assumption of a rapidly oscillating process. However,
a general framework to treat the problem of creation of solitons
without the assumptions of a rapidly oscillating initial condition or
smallness of the random perturbation seems not to be available at the
moment. In particular, the result strongly depends on the kind of
random process used in the initial condition. However, specific cases
can be treated with ad-hoc techniques: some examples will be provided
in a subsequent publication.

\begin{appendix}\label{sec:app}
\section*{Appendix: Technical lemmas}
We collect here a few technical results needed for the proof of Theorem
\ref{teo convergenza Phi e Psi} and Propositions \ref{prop NLS 1} and
\ref{prop KdV 1}. We shall retain the different notation introduced
there. 
We start by showing the proof of Lemma \ref{lemma approx H}.

\begin{pf*}{Proof of Lemma \ref{lemma approx H}}
A subset $\mathcal{A}\subset\mathcal{H}$ of the Hilbert space
$\mathcal{H}$ is
rela\-tively compact if and only if it is bounded, and for every
$\lambda>0$ there exists a finite-dimensional subspace $\mathcal
{H}_n\subset
\mathcal{H}$ s.t.
\[
\sup_{h\in\mathcal{A}} d_\mathcal{H}( h, \mathcal{H}_n
) < \lambda.
\]
Therefore, the two conditions (\ref{approx H}) are necessary to have
that for any $\kappa>0$ there exists a compact subset $\mathcal
{A}_\kappa \subset\mathcal{H}$ s.t.
\[
\sup_{\varepsilon\in(0,1]} \mathbb{P} \bigl( X^\varepsilon\in\mathcal{H}
\setminus\mathcal{A}_\kappa \bigr) \le\kappa.
\]
The two conditions (\ref{approx H}) are also sufficient. Indeed, if
they are satisfied for any given $\kappa>0$ one obtains a compact
subset $\mathcal{A}_\kappa$ of $\mathcal{H}$ considering the closure of
\[
\mathcal{B}_\kappa= \mathcal{H}\setminus\bigcup
_{n\ge1} \bigl( \bigl\{ h | \|h\| > \rho_\kappa\bigr\} \cup \bigl
\{ h | d_\mathcal{H} ( h, \mathcal{H}_{\kappa/2^n, 1/n} ) > 1/n \bigr\}
\bigr).
\]
Then one obtains that
\[
\mathbb{P} \bigl( X^\varepsilon(x) \in\mathcal{H}\setminus
\mathcal{A}_\kappa \bigr) \le2 \kappa.
\]
The lemma is proved.
\end{pf*}

%
\begin{lemma}\label{lemma:construc-Hn}
We construct an explicit example of the subspaces
$\mathcal{H}_n\subset\mathcal{H}=W^{3,2}(G)$ to be used in the proof
of Theorem
\ref{teo convergenza Phi e Psi} and Lemma \ref{lemma tecnico approx
Hn}.
\end{lemma}

\begin{pf*}{Construction of $\mathcal{H}_n$}
Divide $G=(-N,N)^3$ into cubes with sides of length $1/n$ and add one
extra layer of cubes around it:
\[
A_{i,j,k}:= \bigl[ i/n, (i+1)/n \bigr)\times\bigl[ j/n, (j+1)/n \bigr)\times\bigl[ k/n, (k+1)/n \bigr)
\]
for $i,j,k=-(nN+1),\ldots, nN$. Define the piecewise (on every cube)
polynomials of fourth degree as
%
%
\begin{equation}
\widetilde h(x):= \sum_{i,j,k=-(Nn+1)}^{Nn}
\sum_{m=0}^4 \frac{1}{m!} \bigl
\langle a^{(m)}_{i,j,k} | x-y_{i,j,k} \bigr
\rangle^{(m)} \mathbh{1}_{ \{ {A_{i,j,k}} \}} (x), \label{eq h tilde}
\end{equation}
where\vspace*{-3pt} $y_{i,j,k}$ is the center of the cube $A_{i,j,k}$,
$a^{(m)}_{i,j,k}$ are families of $m$-dimensional tensors and the
brackets denote the\vspace*{-1pt} relative tensor products [so that, e.g., $\langle
a^{(4)}_{i,j,k} | x - y_{i,j,k} \rangle^{(4)}$ denotes the product
between the four-dimensional tensor $a^{(4)}_{i,j,k}$ and four copies
of the vector $(x-y_{i,j,k}) $]. With these definitions $\widetilde h$
is a function defined on $[-N-\frac{1}{n},N+\frac{1}{n})^3$, but its
restriction to $G$ does not belong to $\mathcal{H}$ in general since it
may not even be continuous. Let $\Gamma$ be a real, nonnegative, smooth
function, with compact support contained in $[-1/2,1/2]^3$ and such
that $\int_{[-1/2,1/2]^3} \Gamma(y) \,\dd y =1$. Setting $\Gamma^n(y):=
n^3 \Gamma(ny)$ we can finally define $\mathcal{H}_n$ as the
finite-dimensional space of functions of the form $h(x):=(\widetilde h
\star\Gamma^n) (x)$. Remark that $\widetilde h$ has been defined on a
set larger than $G$, so that the convolution product is well defined
for $x\in G$.
\end{pf*}

%
\begin{lemma}\label{lemma tecnico approx Hn}
For every $g\in\mathcal{C}^4 (G)$, there exists a $g_n\in\mathcal
{H}_n$ s.t.
\[
\|g-g_n \|_{\mathcal{H}} \le C \frac{1}{n} \| g
\|_{\mathcal{C}^4(G)}.
\]
\end{lemma}

\begin{pf}
\textit{Step} 1 (Construction of $g_n$). For $i,j,k = -nN,\ldots, nN-1$
(which means that $y_{i,j,k} \in G$) set $a_{i,j,k}^{(m)}:= D^{m} g
(y_{i,j,k}) $. For\vspace*{-1pt} $i,j,k$ such that $y_{i,j,k} \notin G$ set
$a_{i,j,k}^{(m)}:= D^{m} g (y') $, where $y'$ is the\vspace*{-1pt} nearest cube
center; notice that the distance of these two points is at most the
diameter of the cubes, which we call $2\delta:= 2\sqrt{3} n^{-1}$. With
the $a_{i,j,k}^{(m)}$ thus defined\vspace*{1pt} we construct the piecewise
polynomial function $\widetilde g_n$ as in (\ref{eq h tilde}): on the
cubes the centers of which are not in $G$, this function is just a copy
of the function defined on the nearest cube with center in $G$.
Finally, we define $g_n$ as the convolution product $g_n:= \Gamma^n
\star\widetilde g_n$.

\textit{Step} 2 (Estimates). For every multiindex $a\in\mathbb{N}^3$
such that $|a|_{1}:= a_1 + a_2 +\break a_3 \le3$, we need to estimate
\[
\int_{G} \bigl| \partial^a \bigl(
\Gamma^n\star\widetilde g_n \bigr) (x) -
\partial^a g(x) \bigr|^2 \,\dd x.
\]
To clarify the procedure to obtain an estimate for the above term, we
first give explicit computations for the case $a=e_1=(1,0,0)$. Recall
that, by definition,
\[
\sum_{i,j,k}\int_{A_{i,j,,k}}
\Gamma^n (x-y) \,\dd y = \int_{[-(N+1/n),N+1/n)^3}
\Gamma^n(x-y) \,\dd y =1.
\]
For $a=(1,0,0)$, using twice the inequality $(a+b)^2\le2(a^2+b^2)$, we
have
%
%
\begin{eqnarray}
&& \bigl| \partial_1 \bigl(\Gamma^n \star\widetilde g_n \bigr) (x) - \partial_1 g(x) \bigr|^2\nonumber
\\
&&\qquad = \biggl|\sum_{i,j,k} \int_{A_{i,j,k}} - \partial_{y_1} \Gamma^n (x-y) \widetilde g_n (y) \,\dd y - \partial_1 g(x) \biggr|^2 \nonumber
\\
&&\qquad \le 2 \biggl\{ \biggl| \sum_{i,j,k} \int _{A_{i,j,k}} \Gamma^n (x-y) \partial_{y_1} \widetilde g_n (y) \,\dd y - \partial_1 g(x) \biggr|^2\nonumber
\\
&&\hspace*{43pt}{} + \biggl| \sum_{i,j,k} \int_{\partial_1 A_{i,j,k}}
\Gamma^n (x-y) \bigl[ \widetilde g_n \bigl(y_1^+,y_2,y_3 \bigr) - \widetilde g_n \bigl(y_1^-,y_2,y_3 \bigr)\bigr] \,\dd y_2 \,\dd y_3 \biggr|^2 \biggr\}\hspace*{-6pt}
\nonumber\\[-20pt]\\[2pt]
&&\qquad \le 4 \biggl\{ \biggl| \sum_{i,j,k} \int
_{A_{i,j,k}} \Gamma^n (x-y) \bigl| \partial_{y_1}
\widetilde g_n (y) - \partial_1 \widetilde
g_n(x) \bigr| \,\dd y \biggr|^2\nonumber
\\
&&\hspace*{43pt}{} + \bigl| \partial_1 \widetilde g_n(x) - \partial_1 g(x) \bigr|^2\nonumber
\\
&&\hspace*{43pt}{} + \biggl| \sum_{i,j,k} \int_{\partial_1 A_{i,j,k}}
\Gamma^n (x-y) \bigl[ \widetilde g_n\bigl(y_1^+,y_2,y_3 \bigr) - \widetilde g_n \bigl(y_1^-,y_2,y_3 \bigr)
\bigr] \,\dd y_2 \,\dd y_3 \biggr|^2 \biggr\}\hspace*{-6pt}\nonumber
\\
&&\qquad = 4 \{ S_1 + S_2 + S_3 \},\hspace*{-4pt}\nonumber
\end{eqnarray}
where $\partial_1 A_{i,j,k}$ denotes the faces of the cubes orthogonal
to the direction $e_1:=(1,0,0)$. Notice that the number of nonzero
terms in the sums over $i,j,k$ of this proof is limited to 8 because
the support of $\Gamma^n$ can intersect at most 8 cubes. The term $S_1$
can be bounded by the square of
\[
\biggl(\sum_{i,j,k} \bigl\| \Gamma^n (x-
\cdot) \bigr\|_{L^1(A_{i,j,k})} \biggr) \bigl\| \partial_{1} \widetilde
g_n ( \cdot) - \partial_1 \widetilde g_n(x)
\bigr\|_{L^\infty(B_{1/2n}(x))},
\]
where the second term is regarded as a function of $y$ ($x$ is fixed),
and the \mbox{$L^\infty$-}norm is taken on the ball $B_{1/2n}(x)$, which is
the support of $\Gamma^n(x-y)$. The first term above is 1, and to
estimate the second term, the worst case is when $y$ does not belong to
the same cube as $x$: let us say that $y\in A^{(1)}$ and $x\in
A^{(2)}$, where $y^{(1)}$ and $y^{(2)}$ are the centers of the cubes
$A^{(1)}$ and $A^{(2)}$, respectively. We have therefore the bound
\begin{eqnarray*}
&&\bigl\| \partial_{1} \widetilde g_n (\cdot) -
\partial_1 \widetilde g_n(x) \bigr\|_{L^\infty(B_{1/(2n)}(x))}
\\
&&\qquad \le\bigl\| \partial_{1} \widetilde g_n (\cdot) -
\partial_{y_1} \widetilde g_n \bigl(y^{(1)} \bigr)
\bigr\|_{L^\infty} + \bigl| \partial_1 g \bigl(y^{(1)} \bigr) -
\partial_1 g \bigl(y^{(2)} \bigr) \bigr|
\\
&&\quad\qquad{} + \bigl| \partial_{y_1} \widetilde g_n
\bigl(y^{(2)} \bigr) - \partial_{y_1} \widetilde
g_n(x) \bigr|
\\
&&\qquad \le4 \delta\bigl\| D^2 g \bigr\|_{L^\infty(G)}.
\end{eqnarray*}
This provides the bound for the term $S_1$. Similarly, for $S_2$ we
have the bound
\begin{eqnarray*}
\bigl| \partial_1 \widetilde g_n(x) - \partial_1
g(x) \bigr| & \le&\bigl| \partial_1 \widetilde g_n(x) -
\partial_1 g \bigl(y^{(2)} \bigr) \bigr| + \bigl| \partial_1
g \bigl(y^{(2)} \bigr) - \partial_1 g(x) \bigr|
\\
& \le& 2 \delta\bigl\|D^2 g\bigr\|_{L^\infty(G)}.
\end{eqnarray*}
We still need to estimate $S_3$, which contains the boundary terms
deriving from the discontinuities of $\widetilde g_n$ (and, in the
general case, of its derivatives). This term requires more careful
estimates. With $C_\Gamma:= \sup_x \Gamma(x)$ and using the fact that
$\| \Gamma^n (x-\cdot) \|_{L^1(\partial_1 A_{i,j,k})} \le n C_\Gamma$,
we have
\begin{eqnarray*}
&& \sum_{i,j,k} \bigl\| \Gamma^n (x- \cdot)
\bigr\|_{L^1(\partial_1 A_{i,j,k})} \bigl\| \widetilde g_n \bigl(y_1^+,\cdot
\bigr) - \widetilde g_n \bigl(y_1^-,\cdot \bigr)
\bigr\|_{L^\infty(\partial_1 A_{i,j,k} )}
\\
&&\qquad \le n C_\Gamma\sum_{i,j,k} \bigl\{
\bigl\| \widetilde g_n \bigl(y_1^+,\cdot \bigr) -
g(y_1, \cdot) \bigr\|_{L^\infty(\partial_1 A_{i,j,k} ) }
\\
&&\hspace*{74pt}{} + \bigl\| g(y_1,\cdot) - \widetilde g_n
\bigl(y_1^-,\cdot \bigr) \bigr\|_{L^\infty(\partial_1 A_{i,j,k} ) } \bigr\}
\\
&&\qquad \le8 n C_\Gamma\delta^4 \bigl\| D^4 g
\bigr\|_{L^\infty(G)}
\\
&&\qquad = C \delta^3 \bigl\| D^4 g \bigr\|_{L^\infty(G)}.
\end{eqnarray*}
The sums over $i,j,k$ above are meant as sums over the faces
$\partial_1 A_{i,j,k}$ intersecting the support of $\Gamma^n
(x-\cdot)$, which are at most 4. Collecting all these results, we have
the uniform bound
\begin{eqnarray*}
\bigl| \partial_1 \bigl(\Gamma^n\star\widetilde
g_n \bigr) (x) - \partial_1 g(x) \bigr|^2 & \le&
C \delta^2 \bigl( \bigl\|D^2 g \bigr\|^2_{L^\infty(G)}
+ \delta^{4} \bigl\|D^4 g \bigr\|^2_{L^\infty(G)}
\bigr).
\end{eqnarray*}
We proceed in the same way with higher order derivatives to get, for a
generic derivative $a$ of order $0\le|a|\le3$, the estimate
\begin{eqnarray*}
&& \bigl| \partial^a \bigl(\Gamma^n\star\widetilde
g_n \bigr) (x) - \partial^a g(x) \bigr|^2
\\
&&\qquad \le C \biggl\{ \biggl| \sum_{i,j,k} \int_{A_{i,j,k}}
\Gamma^n(x-y) \bigl| \partial^a \widetilde g_n
(y) - \partial^a \widetilde g_n(x) \bigr| \,\dd y
\biggr|^2
\\
&&\hspace*{125pt}{}
 + \bigl| \partial^a \widetilde g_n(x) -
\partial^a g(x) \bigr|^2 + S_3^a
\biggr\}
\\
&&\qquad \le C \bigl( \bigl\|D^{|a|+1} g \bigr\|_{L^\infty(G)}^2
\delta^2 +S_3^a \bigr).
\end{eqnarray*}
For $a=0$, $S_3^a=0$, but in the general case, the estimate of the term
$S_3^a$ is a little bit more delicate, since one gets more boundary
terms. In particular, when integrating by parts, derivatives along
different directions result in terms containing discontinuities of
$\widetilde g_n$ and its derivatives along the faces (that we denote
for brevity $\partial A$), edges (denoted $\partial^2 A$) or vertices
(denoted $\partial^3 A$) of the cubes, while multiple derivatives along
the same direction result in derivatives of $\Gamma^n$ appearing. For
example, for $a=(1,1,1)$ we get three kinds of terms,
\begin{eqnarray*}
&\displaystyle\int_{\partial^3 A_{i,j,k}} \Gamma^n (x-y) \Delta\widetilde
g_n(y) \,\dd y,&
\\
&\displaystyle\int_{\partial^2 A_{i,j,k}}\Gamma^n (x-y) \Delta\partial\widetilde g_n (y) \,\dd y,&
\\
&\displaystyle\int_{\partial A_{i,j,k}} \Gamma^n (x-y) \Delta
\partial^2 \widetilde g_n (y) \,\dd y,&
\end{eqnarray*}
where $\Delta$ denotes the jump of the function. For $a=(3,0,0)$ we
also have terms like
\[
\int_{\partial_1 A_{i,j,k}} \partial_1^2 \Gamma^n (x-y) \Delta\widetilde g_n(y) \,\dd y,\qquad
\int_{\partial_1 A_{i,j,k}} \partial_1 \Gamma^n (x-y) \Delta\partial_1\widetilde g_n(y) \,\dd y
\]
and in the general case we find also terms like
\[
\int_{\partial^2 A_{i,j,k}} \partial\Gamma^n (x-y) \Delta
\widetilde g_n(y) \,\dd y.
\]
However, we can bound all this terms in the same way. We have that, for
$m\in\mathbb{N}$ and $b,c\in\mathbb{N}^3$,
\begin{eqnarray*}
\int_{\partial^{m} A_{i,j,k}} \partial^{b} \Gamma^n(x)
\,\dd x &\le& C_\Gamma n^{m+|b|} = C \delta^{-(m+|b|)},
\\
\bigl|\Delta\partial^c \widetilde g_n(y) \bigr| &=& \bigl|
\partial^c\widetilde g_n \bigl(y^+ \bigr) -
\partial^c\widetilde g_n \bigl(y^- \bigr) \bigr| \le 2
\bigl\|D^4 g\bigr\|_{L^\infty(G)} \delta^{4-|c|}.
\end{eqnarray*}
Therefore
\[
S_3^a \le C \bigl\| D^4 g \bigr\|_{L^\infty(G)}^2
\delta^{2(4-|a|)}.
\]
Note that for all the terms composing $S_3^a$ we always have $m+|b|+|c|
= |a|\le3$. Summing up, we have obtained the bound
\begin{eqnarray*}
\| g - g_n \|_{\mathcal{H}}^2 &=& \sum
_a \int_{G} \bigl| \partial^a
\bigl(\Gamma^n\star\widetilde g_n \bigr) (x) -
\partial^a g(x) \bigr|^2 \,\dd x
\\
&\le& C \sum_a \bigl( \bigl\|D^{|a|+1} g
\bigr\|_{L^\infty(G)}^2 \delta^2 + \bigl\| D^4 g
\bigr\|_{L^\infty(G)}^2 \delta^{2(4-|a|)} \bigr)
\\
&\le& C \frac{1}{n^2} \| g \|_{\mathcal{C}^4(G)}^2.
\end{eqnarray*}
The lemma is proved.
\end{pf}

%
\begin{cor} \label{cor approx H}
For any $h\in\mathcal{H}$, $\lim_{n\to\infty} \pi_{\mathcal
{H}_n}^{} h = h$.
\end{cor}

\begin{pf} Fix any $\varepsilon>0$. By density, there exist a
$h_\varepsilon\in\mathcal{C}^4(G)$ s.t. $\|
h-h_\varepsilon\|_{\mathcal{H}} \le\varepsilon/2$. Also, by the
continuity of the projection, $\|\pi_{\mathcal{H}_n}^{} h - \pi
_{\mathcal{H}_n}^{} h_\varepsilon\|_{\mathcal{H}} \le\|
h-h_\varepsilon\|_{\mathcal{H}} \le\varepsilon/2$. Since $\| \pi
_{\mathcal{H}_n}^{} h_\varepsilon- h_\varepsilon\|_{\mathcal{H}} \le\|
h_{\varepsilon,n} - h_{\varepsilon} \|_\mathcal{H}$, by the above lemma
we get
\begin{eqnarray*}
\| \pi_{\mathcal{H}_n}^{} h - h \|_{\mathcal{H}} & \le&\bigl\|
\pi_{\mathcal{H}_n}^{} ( h - h_\varepsilon) \bigr\|_{\mathcal{H}} + \|
\pi_{\mathcal{H}_n}^{} h_\varepsilon- h_\varepsilon
\|_{\mathcal{H}} + \| h_\varepsilon- h \|_{\mathcal{H}}
\\
& \le&\varepsilon+ C \frac{1}{n} \|h_{\varepsilon}\|_{\mathcal
{C}^4(G)}.
\end{eqnarray*}
Therefore
\[
\limsup_{n\to\infty} \| \pi_{\mathcal{H}_n}^{} h - h
\|_{\mathcal{H}} \le\varepsilon
\]
and since $\varepsilon$ is arbitrary, the corollary is proved.
\end{pf}

%
\begin{lem}\label{lemm J zeta-sigma}
Let $F(\xi,\eta, \sigma)$ be the function defined in the proof of
Proposition \ref{prop NLS 1}. Then, whenever $\zeta_0=i\eta_0$ is the
value corresponding to a soliton component of the solution of the
deterministic NLS equation, the determinant of the Jacobian matrix
\[
J:= \pmatrix{ \partial_\xi\Re(F) & \partial_\eta\Re(F)
\cr
\partial_\xi\Im(F) & \partial_\eta\Im(F) }
\]
at point $(0,\eta_0,0)$ is not zero.
\end{lem}

\begin{pf} 
For $\sigma=0$ system (\ref{ZSSP - pert}) becomes deterministic, and
the solution is given by (\ref{NLSE det sq wall - sol1a})--(\ref{NLSE
det sq wall - sol1b}). Then, setting $c:= \sqrt{q^2-\zeta^2}$,
\[
i \partial_\xi\psi_1 (\xi,\eta,0)= \partial_\eta
\psi_1 (\xi,\eta,0)= \biggl[ \frac{q^2}{c^3} - iR
\frac{\zeta}{c} \biggr] \sin( c R ) + R \frac{\zeta^2}{c^2} \cos(c R )
\]
so that
\[
i \partial_\xi F (0,\eta_0,0) = \partial_\eta
F (0,\eta_0,0).
\]
We are left to verify that
\begin{eqnarray*}
0 \neq\det J (0,\eta_0,0) &=& \bigl[\partial_\xi\Re(F)
\partial_\eta\Im(F) - \partial_\eta\Re(F)
\partial_\xi\Im(F) \bigr] (0,\eta_0,0)
\\
&=& \bigl[\Re(\partial_\xi F ) \Im( \partial_\eta F ) -
\Re(\partial_\eta F ) \Im( \partial_\xi F ) \bigr] (0,
\eta_0,0)
\\
&=& \bigl[ \bigl(\Re( \partial_\xi F ) \bigr)^2 + \bigl(
\Im( \partial_\xi F ) \bigr)^2 \bigr] (0,
\eta_0,0)
\end{eqnarray*}
or equivalently
%
%
\begin{equation}
\label{NLS pert cond 1} \partial_\xi F (0,\eta_0,0) = -i
\biggl[ \frac{q^2}{c_0^3} + R \frac{\eta_0}{c_0} \biggr] \sin( c_0 R
) +i R\frac{\eta_0^2}{c_0^2} \cos(c_0 R ) \neq0,
\end{equation}
where $c_0=\sqrt{q^2-\eta_0^2}$. Observe that condition (\ref{NLSE det
sq wall f}) implies that $\eta_0\le q$; $c_0$ is therefore real.
Indeed, for $\eta_0>q$, $c_0$ would be purely imaginary, and the
function $f$ of equation (\ref{NLSE det sq wall f}) would become the
sum of two purely imaginary terms of the same sign, so that it cannot
be zero. In equation (\ref{NLS pert cond 1}) the coefficient of the
sinus is the sum of two nonzero terms of the same sign, so that to
ensure the condition we need to check that
%
%
\begin{equation}
\label{NLS tan-g} \tan(c_0 R) = \frac{R\eta_0^2 c_0}{q+R \eta_0 c_0^2}
\end{equation}
does \textit{not} holds for $\eta_0$ solution of (\ref{NLSE det sq wall
f}). The compatibility condition between (\ref{NLSE det sq wall
f})~and~(\ref{NLS tan-g}) is
\[
- \frac{c_0}{\eta_0}= \frac{R\eta_0^2 c_0}{q+R \eta_0 c_0^2}
\]
or equivalently
\[
\bigl( q + R\eta_0 c_0^2 + R
\eta_0^3 \bigr) c_0 =0,
\]
which cannot be satisfied (recall that $\eta_0\neq q$, so that
$c_0\neq
0$). The lemma is proved.
\end{pf}


%
\begin{prop}\label{prob reg zeta-sigma}
The stochastic differential\vspace*{-1pt} equation (\ref{ZSSP - pert}) defines a
sto\-chastic flow $\Psi_x^{(\zeta,\sigma)}=(\psi_1, \psi_2)^T$ of
$\mathcal{C}^1$-diffeomorphisms,\vspace*{-1pt} which is $\mathcal{C}^1$ also in the
parameters
$\xi,\eta,\sigma$.
\end{prop}

\begin{pf}
Write the SDE in It\^o and vector form,
%
%
\begin{equation}
\label{NLS vector} \dd\Psi= i \pmatrix{ -\zeta+i\sigma^2 & q
\cr
q &
\zeta+i\sigma^2 } \Psi\,\dd x + i \sigma\pmatrix{ 0 & 1
\cr
1 & 0 }
\Psi\,\dd W_x.
\end{equation}
The coefficients of the SDE are independent of $x$ and Lipschitz
continuous in $\Psi$ for every $\zeta$ and $\sigma$. Therefore, the
existence of a unique solution to the SDE, which defines a stochastic
flow of homeomorphisms $\Psi^{(\zeta,\sigma)}(x)$, is a classical fact;
see, for example, \cite{Ku84}. Following the notation of \cite{Ku}, we
define the local characteristic of the SDE as $(a,b,x)$, where
\begin{eqnarray*}
a \bigl(\zeta,\zeta',\sigma,\sigma',x \bigr) &:=& -
\sigma\sigma'\pmatrix{ 0 & 1
\cr
1 & 0 },
\\
b(\zeta,\sigma,x) &:=& \pmatrix{ -\zeta& q
\cr
q & \zeta}.
\end{eqnarray*}
Fix any $n\in\mathbb{N}$, define the set $G_n:=\{ (\xi,\eta,\sigma) |
|\xi|<n, 0<\eta<n, 0<\sigma<n\}$ and consider the SDE only with
parameters in $G_n$. Then, both $a$ and $b$ are uniformly bounded and,
together with their first derivatives, are Lipschitz continuous in the
parameters. This means that the coefficients satisfy condition
$(A.5)_{1,0}$ of \cite{Ku}, Chapter~4.6. It follows from
\cite{Ku}, Theorem~4.6.4, that $\Psi^{(\zeta,\sigma)}(x)$ is $\mathcal{C}^1$ in the
parameters almost surely on $G_n$. Let $\Omega_n$ be the set of
$\omega\in\Omega$ such that $\Psi^{(\zeta,\sigma)} (x) \in
\mathcal{C}^1(G_n)$: it is a set of full measure. Since $n$ is
arbitrary, $\Psi^{(\zeta,\sigma)}(x)$ is actually $\mathcal{C}^1
(\mathbb{R}\times\mathbb{R}_+\times\mathbb{R}_+ )$ for every
$\omega\in\bigcap_n \Omega_n$, which is still a set of full measure.
This proves the last statement of the proposition.

Since \cite{Ku}, Theorem 4.6.5, states that $\Psi^{(\zeta,\sigma)}(x)$
is actually a stochastic flow of~$\mathcal{C}^1$-diffeomorphisms, the
proof is complete.
\end{pf}

%
\begin{lem}\label{lem J KdV}
Let $F(\eta, \sigma)$ be the function defined in the proof of
Proposition~\ref{prop KdV 1}. Then, for all $\eta_0$ corresponding to
soliton components of solutions of the deterministic problem,
$\partial_\eta F(\eta_0,0)\neq0$.
\end{lem}

\begin{pf} 
For $\eta=\eta_{0}$ and $\sigma=0$ we have that $\varphi=\varphi_{0}$
and (recall that $c_0=\sqrt{q-\eta_0^2}$)
\begin{eqnarray*}
\partial_{\eta}\varphi(R) &=& \sin(c_0 R) \biggl[
\frac{1+ \eta_0 R}{c_0} + \frac{\eta_0^2} {c_0^3} \biggr] - \frac
{R \eta_0^2}{c_0^2} \cos
(c_0 R),
\\
\partial_\eta\widetilde\varphi(R)&=& \cos(c_0 R) [ 1+ R
\eta_0 ] + \frac{\eta_0}{c_0}\sin(c_0 R) [1 +
\eta_0 R ] = [1 + \eta_0 R ] \varphi_0 (R).
\end{eqnarray*}
Since
\begin{eqnarray*}
\partial_{\eta} F &=& \partial_{\eta} \widetilde\varphi+
\varphi+ \eta\partial_{\eta} \varphi,
\\
\partial_{\eta} F (\eta_0, 0) &=& \cos(c_0 R)
\biggl[ 2+\eta_0 R - \frac{\eta_0^3 R}{c_0^2} \biggr] + \sin(c_0
R) \biggl[ \frac{3\eta_0 + 2\eta_0^2 R}{c_0} + \frac{\eta
_0^3}{c_0^3} \biggr].
\end{eqnarray*}
The coefficient of the sinus is strictly positive (the coefficient of
the cosinus has instead at least one zero for $\eta_0\in[0,\sqrt{q}]$,
since it is positive for $\eta_0=0$ and negative for
$\eta_0\to\sqrt{q}$). Therefore, we only need to verify that the
equation%
%
\begin{equation}\label{eq tan-g}
g(R,q,\eta_0):=\tan(c_0 R) +
\frac{ 2+\eta_0 R - \eta_0^3 R/c_0^2}{(3\eta_0 + 2 \eta_0^2 R)/c_0
+ \eta_0^3/c_0^3 } =0
\end{equation}
is \textit{not} satisfied, knowing that $\eta_0$ is a value
corresponding to a soliton solution of the deterministic equation. As
we have seen, we can either have $\eta_0=\sqrt{q/2}$ if condition
(\ref{sqrt(q/2)}) is satisfied, or else $\eta_0$ is given as the
solution of equation (\ref{eq tan-f}) in $(0,\sqrt{q})\setminus\{
\sqrt{q/2}\}$.

In the first case we have that $c_0 = \sqrt{q/2}$, so that condition
(\ref{sqrt(q/2)}) implies that $\cos(c_0 R)=0$ and $\sin(c_0 R)=\pm
1$. Therefore, $\partial_\eta F\neq0$.

Consider now the second case. We look for points
$\eta\in(0,\sqrt{q})\setminus\{\sqrt{q/2}\}$ such that
$f(\eta)=g(\eta)=0$. If such a point exists, then
\[
\frac{ 2 \eta c}{q-2\eta^2} = - \frac{ 2+\eta R - \eta^3 R/c^2
}{(3\eta+ 2 \eta
^2 R)/c + \eta^3/c^3 },
\]
which also reads
\[
\frac{ 2 \eta}{q-2\eta^2} = - \frac{ (2+\eta R )c^2- \eta^3 R }{ (3\eta
+2 \eta^2 R)c^2+ \eta
^3 }
\]
or equivalently
%
%
\begin{equation}\label{test}
\frac{ (2\eta^2+2\eta^3R)(q-\eta^2) + 2\eta^4
(1+\eta R) + (2+\eta R) (q-\eta^2)q - \eta^3 R q} {
(q-2\eta^2)[ (3\eta+2 \eta^2 R)(q-\eta^2)+ \eta^3 ]}=0.\hspace*{-32pt}
\end{equation}
For $\eta< \sqrt{q/2}$ the denominator is positive and the numerator
\begin{eqnarray*}
&& \bigl(2\eta^2+2\eta^3R \bigr) \bigl(q-
\eta^2 \bigr) + 2\eta^4 (1+\eta R) + (2+\eta R) \bigl(q-
\eta^2 \bigr)q - \eta^3 R q
\\
&&\qquad > \eta R \bigl(q-\eta^2 \bigr) - \eta^3 R q >
0,
\end{eqnarray*}
so that fraction (\ref{test}) is positive and cannot be zero. For $
\sqrt{q/2}< \eta< \sqrt{q}$ the denominator is negative and the
numerator is larger than $2\eta^5 R - \eta^3 R q = \eta^3 R (\eta^2-q)
>0$, so that fraction (\ref{test}) is negative and cannot be zero. The
lemma is proved.
\end{pf}
\end{appendix}

\section*{Acknowledgments}
I wish to thank my supervisor Professor Josselin Garnier for all the
fruitful discussions that have helped me to greatly improve these results.


%

\printaddresses

\end{document}